\newcommand{\gra}{\operatorname{gra}}
\newcommand{\zer}{\operatorname{zer}}
\newcommand{\Fix}{\operatorname{Fix}}
\newcommand{\dom}{\operatorname{dom}}
\newcommand{\Id}{\operatorname{Id}}
\newcommand{\bs}{\boldsymbol}
\def\bomega{{\boldsymbol \omega}}
\def\bC{{\boldsymbol C}}
\def\bD{{\boldsymbol D}}
\def\bF{{\boldsymbol F}}
\def\bu{{\boldsymbol u}}
\def\bv{{\boldsymbol v}}
\def\bx{{\boldsymbol x}}
\def\by{{\boldsymbol y}}
\newcommand{\Tdr}{T_{\operatorname{aDR}}}
\newcommand{\Hi}{\mathcal{H}}
\newcommand{\RR}{\mathbb{R}}
\def\tto{\rightrightarrows}
\def\wto{\rightharpoonup}
\newcommand{\seq}[2]{\left(#1\right)_{#2=0}^{\infty}}
\Crefname{fact}{Fact}{Facts}
\Crefname{enumi}{}{}
\def\th@plain{%
	\thm@notefont{}% same as heading font
	\itshape % body font
}
\def\th@definition{%
	\thm@notefont{}% same as heading font
	\normalfont % body font
}
\setlist[enumerate]{topsep=3pt}
\newtheorem{theorem}{Theorem}[section]
\newtheorem{corollary}{Corollary}[section]
\newtheorem{lemma}{Lemma}[section]
\newtheorem{fact}{Fact}[section]
\theoremstyle{definition}
\newtheorem{definition}{Definition}[section]
\newtheorem{remark}{Remark}[section]
\newcommand*\samethanks[1][\value{footnote}]{\footnotemark[#1]}
\def\begproof{\noindent{\it Proof}. \ignorespaces}
\def\endproof{\ensuremath{\hfill \quad \square}}
\begin{document}

\title{Demiclosedness principles\\ for generalized nonexpansive mappings}

\author{Sedi Bartz \thanks{Department of Mathematical Sciences, Kennedy College of Sciences, University of Massachusetts Lowell, MA, USA. E-mail:~\href{mailto:sedi_bartz@uml.edu}{sedi\_bartz@uml.edu}, \href{mailto:ruben_campoygarcia@uml.edu}{ruben\_campoygarcia@uml.edu}, \href{mailto:hung_phan@uml.edu}{hung\_phan@uml.edu}} 
\and Rub\'en Campoy \samethanks[1] \and Hung M.\ Phan \samethanks[1]}

\maketitle

\begin{abstract}
Demiclosedness principles are powerful tools in the study of convergence of iterative methods. For instance, a multi-operator demiclosedness principle for firmly nonexpansive mappings is useful in obtaining simple and transparent arguments for the weak convergence of the shadow sequence generated by the Douglas--Rachford algorithm. We provide extensions of this principle which are compatible with the framework of more general families of mappings such as cocoercive and conically averaged mappings. As an application, we derive the weak convergence of the shadow sequence generated by the adaptive Douglas--Rachford algorithm.

\paragraph{\small Keywords:} Demiclosedness principle $\cdot$ Cocoercive mapping  $\cdot$ Conically averaged mapping $\cdot$ Weak convergence  $\cdot$  Douglas--Rachford algorithm $\cdot$ Adaptive Douglas--Rachford algorithm
\paragraph{\small Mathematics Subject Classification 2020:} 
47H05 $\cdot$ % Monotone operators and generalizations
47J25 $\cdot$ % Iterative procedures
49M27 % Decomposition methods
\end{abstract}

\section{Introduction}

Demiclosedness principles play an important role in convergence analysis of fixed point algorithms. The concept of \emph{demiclosedness} sheds light on topological properties of mappings, in particular, in the case where a weak topology is considered. More precisely, given a weakly sequentially closed subset $D$ of a Hilbert space $\Hi$, the mapping $T:D\to\Hi$ is said to be \emph{demiclosed} at $x\in D$, if for every sequence $(x_k)$ in $D$ such that $(x_k)$ converges weakly to $x$ and $T(x_k)$ converges strongly, say, to $u$, it follows that $T(x)=u$. By its definition, demiclosedness holds trivially whenever $T$ is weakly sequentially continuous; however, it does not hold in general. Let $\Id$ denote the identity mapping on $\Hi$. A fundamental result in the theory of nonexpansive mappings is Browder's celebrated \emph{demiclosedness principle}~\cite{Browder}, which asserts that, if $T$ is nonexpansive, then the mapping $\Id-T$ is demiclosed at every point in $D$. Browder's result holds in more general settings and, by now,  has become a key tool in the study of  asymptotic and ergodic properties of nonexpansive mappings; see~\cite{DC1,DC2,DC3,DC4}, for example.

In~\cite{Demiclosed}, Browder's demiclosedness principle was extended and a version for finitely many firmly nonexpansive mappings was provided. As an application, a simple proof of the weak convergence of the \emph{Douglas--Rachford (DR) algorithm}~\cite{DR56,LM79} was also provided in~\cite{Demiclosed}: The DR algorithm belongs to the class of splitting methods for the problem of finding a {\em zero of the sum} of two maximally monotone operators $A,B:\Hi\tto\Hi$, see \eqref{e:0inA+B}. The DR algorithm generates a sequence by an iterative application of the  DR operator (see~\eqref{eq:aDR_opeartor},~\eqref{e:TaDR} and the comment thereafter), which can be expressed in terms of the resolvents (see~\cref{d:resol}) of $A$ and $B$. If the solution set is nonempty, then the DR sequence converges weakly to a fixed point such that the resolvent of $A$ maps it to a zero of $A+B$. Thus, we see that, in fact, we are interested in the image of the DR sequence under the resolvent of $A$.
This image is often referred to as the \emph{shadow sequence}. The resolvent of a maximally monotone operator is continuous (in fact, firmly nonexpansive) but not weakly continuous, in general. Hence, the convergence of the shadow sequence can not be derived directly from the convergence of the DR sequence, unless the latter converges in norm. However, in general, norm convergence does not hold: In~\cite{weakDR}, an example of a DR iteration which does not converge in norm was explicitly constructed. Regardless of this fact, the weak convergence of the shadow sequence was established by Svaiter in~\cite{Svaiter}. A simpler and more accessible proof of  the weak convergence of the shadow sequence was later given in~\cite{Demiclosed} by employing a multi-operator demiclosedness principle. A demiclosedness principle for circumcenter mappings, a class of operators that is generally not continuous, was recently developed in \cite{BOW18}.

In this paper, we present an extended demiclosedness principles for more general families of operators, which are not necessarily firmly nonexpansive, provided that they satisfy a (firm) nonexpansiveness {balance condition}. We are motivated by the \emph{adaptive Douglas--Rachford (aDR) algorithm} which was recently studied in~\cite{DPadapt} in order to find a zero of the sum of a weakly monotone operator and a strongly monotone operator. Furthermore, the framework of~\cite{DPadapt}  has been recently extended in~\cite{BDP19} in order to hold for monotonicity and comonotonicity settings as well. In both studies~\cite{DPadapt,BDP19}, the convergence of the shadow sequence generated by the aDR is guaranteed only under the assumption that the sum of the operators is strongly monotone. Moreover, the corresponding resolvents in the aDR are not necessarily firmly nonexpansive and, consequently, the demiclosedness principles of~\cite{Demiclosed} can not be directly applied in this framework. Our current approach is compatible with the framework of the aDR. Consequently, we employ our generalized demiclosedness priciples in order to obtain weak convergence of the shadow sequence of the aDR in most cases. To this end we employ and extend techniques and results from~\cite{Demiclosed}.

The remainder of this paper is organized as follows: In~\Cref{sec:prelim}, we review preliminaries and basic results. New demiclosedness principles are provided in~\Cref{sec:demiclosed}. In~\Cref{sec:aDR}, we employ the demiclosedness principles from~\Cref{sec:demiclosed} in order to obtain the weak convergence of the shadow sequence of the adaptive Douglas--Rachford algorithm. Finally, in~\Cref{sec:conclu} we conclude our discussion.

\section{Preliminaries}\label{sec:prelim}

Throughout this paper, $\Hi$ is a real Hilbert space equipped with inner product $\langle\cdot , \cdot\rangle$ and induced norm $\|\cdot\|$. The weak convergence and strong convergence are denoted by $\wto$ and $\to$, respectively. We set $\RR_+:=\{r\in\RR: r\geq 0\}$ and $\RR_{++}:=\{r\in\RR: r>0\}$. Given a set-valued operator $A:\Hi\tto\Hi$, the \emph{graph}, the \emph{domain}, the set of \emph{fixed points} and the set of \emph{zeros} of A, are denoted, respectively, by $\gra A$, $\dom A$, $\Fix A$ and $\zer A$; i.e.,
\[
\begin{aligned}
\gra A&:=\big\{(x,u)\in\Hi\times\Hi : u\in A(x)\big\}, &\dom A&:=\big\{x\in\Hi : A(x)\neq\varnothing\big\},\\
%\ran A:=\big\{x\in\Hi : x\in A(z) \text{ for some } z\in\Hi  \big\},\\
\Fix A&:=\big\{x\in\Hi : x\in A(x)\big\}
&\text{and} \quad \zer A&:=\big\{x\in\Hi : 0\in A(x)\big\}.
\end{aligned}
\]
The \emph{identity} mapping is denoted by $\Id$ and the \emph{inverse} of $A$ is denoted by $A^{-1}$, i.e., $\gra A^{-1}:=\{(u,x)\in \Hi \times \Hi : u\in A(x)\}$.

\begin{definition}\label{def:nonexp}
	Let $D\subseteq\Hi$ be nonempty, let $T:D\to\Hi$ be a mapping, set $\tau>0$ and $\theta>0$. The mapping $T$ is said to be
	\begin{enumerate}[label={\rm(\roman*)}]
		\item \emph{nonexpansive}, if \label{def:nonexp_nonexp}
		\begin{equation*}
		\big\|T(x)-T(y)\big\|\leq\|x-y\|, \quad \forall x,y\in D;
		\end{equation*}
		\item \emph{firmly nonexpansive}, if \label{def:nonexp_firm}
		\begin{equation*}
		\big\|T(x)-T(y)\big\|^2+\big\|(\Id-T)(x)-(\Id-T)(y)\big\|^2\leq\|x-y\|^2, \quad \forall x,y\in D,
		\end{equation*}
		equivalently,
		\begin{equation*}
		\big\langle x-y,T(x)-T(y)\big\rangle\geq\big\|T(x)-T(y)\big\|^2, \quad \forall x,y\in D;
		\end{equation*}
		\item $\tau$-\emph{cocoercive}, if $\tau T$ is firmly nonexpansive, i.e., \label{def:nonexp_coco}
		\begin{equation*}
		\big\langle x-y,T(x)-T(y)\big\rangle\geq \tau\big\|T(x)-T(y)\big\|^2, \quad \forall x,y\in D;
		\end{equation*}
		\item \emph{conically $\theta$-averaged}, if there exists a nonexpansive operator $R:D\to\Hi$ such that \label{def:nonexp_conical}
		\begin{equation*}
		T=(1-\theta)\Id+\theta R.
		\end{equation*}
	\end{enumerate}
\end{definition}

Conically $\theta$-averaged mappings, introduced in~\cite{BMW19} and originally named \emph{conically nonexpansive mappings}, are natural extensions of the classical $\theta$-averaged mappings; more precisely, a conically $\theta$-averaged mapping is $\theta$-averaged whenever $\theta\in{]0,1[}$. Additional properties and further discussions of conically averaged mappings, such as the following result, are available in~\cite{BDP19,GW19}.

\begin{fact}\label{fact:conical_equiv}
Let $D\subseteq\Hi$ be nonempty, let $T:D\rightarrow\Hi$ and let $\theta,\sigma>0$. Then the following assertions are equivalent:
\begin{enumerate}[label={\rm(\roman*)}]
\item $T$ is conically $\theta$-averaged;\label{fact:conical_equivI}
\item $(1-\sigma)\Id+\sigma T$ is conically $\sigma\theta$-averaged;\label{fact:conical_equivII}
\item For all $x,y\in D$,\label{fact:conical_equivIII}
$$\big\|T(x)-T(y)\big\|^2\leq \|x-y\|^2-\frac{1-\theta}{\theta}\big\|(\Id-T)(x)-(\Id-T)(y)\big\|^2.$$
\end{enumerate}
\end{fact}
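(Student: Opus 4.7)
The plan is to prove the cycle (i) $\Leftrightarrow$ (ii) and (i) $\Leftrightarrow$ (iii) by direct algebraic manipulation, as both equivalences follow from the very definition of conical averagedness.

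For (i) $\Leftrightarrow$ (ii), I would exploit the fact that a convex combination of $\Id$ with a conically averaged mapping remains a convex combination of $\Id$ with the same underlying nonexpansive operator. Concretely, if $T$ is conically $\theta$-averaged then $T=(1-\theta)\Id+\theta R$ for some nonexpansive $R:D\to\Hi$, and substituting gives
\[
(1-\sigma)\Id+\sigma T=(1-\sigma)\Id+\sigma(1-\theta)\Id+\sigma\theta R=(1-\sigma\theta)\Id+\sigma\theta R,
\]
which is exactly the conically $\sigma\theta$-averaged representation. The reverse direction is the same computation solved for $T$: from $(1-\sigma)\Id+\sigma T=(1-\sigma\theta)\Id+\sigma\theta R$ divide by $\sigma$ and simplify to recover $T=(1-\theta)\Id+\theta R$.

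For (i) $\Leftrightarrow$ (iii), the key observation is that the representation $T=(1-\theta)\Id+\theta R$ is equivalent to the identity $\Id-T=\theta(\Id-R)$, i.e.\ $R=\Id-\theta^{-1}(\Id-T)$. So I would set $a:=x-y$ and $b:=(\Id-T)(x)-(\Id-T)(y)$ for arbitrary $x,y\in D$; then $R(x)-R(y)=a-\theta^{-1}b$, and the nonexpansiveness of $R$ reads
\[
\|a\|^2-\tfrac{2}{\theta}\langle a,b\rangle+\tfrac{1}{\theta^2}\|b\|^2\le\|a\|^2,\qquad\text{equivalently}\qquad \tfrac{1}{\theta}\|b\|^2\le 2\langle a,b\rangle.
\]
Since $T(x)-T(y)=a-b$, expanding $\|T(x)-T(y)\|^2=\|a\|^2-2\langle a,b\rangle+\|b\|^2$ shows that (iii) is the inequality $\|a\|^2-2\langle a,b\rangle+\|b\|^2\le\|a\|^2-\frac{1-\theta}{\theta}\|b\|^2$, which simplifies to the very same condition $\frac{1}{\theta}\|b\|^2\le 2\langle a,b\rangle$. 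This shows (i) $\Rightarrow$ (iii); for the converse, one simply defines $R:=\Id-\theta^{-1}(\Id-T)$ and runs the calculation backwards, observing that (iii) exactly certifies $\|R(x)-R(y)\|\le\|x-y\|$.

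There is no genuine obstacle: both equivalences reduce to one-line substitutions and a single expansion of a squared norm. The only point that requires a little care is to make sure the equivalence (i) $\Leftrightarrow$ (iii) goes through without any implicit assumption $\theta\le 1$; since $\theta>0$ and the computation above never divides by $1-\theta$, the result holds for all $\theta>0$, which is what allows \Cref{fact:conical_equiv} to cover the genuinely conical (rather than strictly averaged) case.
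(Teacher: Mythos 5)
Your argument is correct: the decomposition $(1-\sigma)\Id+\sigma T=(1-\sigma\theta)\Id+\sigma\theta R$ for (i) $\Leftrightarrow$ (ii), and the reduction of both (i) and (iii) to the single inequality $\tfrac{1}{\theta}\|b\|^2\le 2\langle a,b\rangle$ with $a:=x-y$ and $b:=(\Id-T)(x)-(\Id-T)(y)$, are exactly right, and your remark that nothing requires $\theta\le 1$ is the relevant point for the conical case. The paper itself gives no internal proof here --- it simply cites \cite[Proposition~2.2]{BDP19} --- so your self-contained verification is, if anything, more informative than what the paper provides, and it matches the standard computation carried out in that reference.
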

\begin{proof}
See~\cite[Proposition~2.2]{BDP19}.
\end{proof}

\begin{lemma}\label{l:constants}
Let $D\subseteq\Hi$ be nonempty, let $T:D\to\Hi$ and let $\tau,\theta>0$.
\begin{enumerate}[label={\rm(\roman*)}]
\item If $T$ is $\tau$-cocoercive, then it is $\tau'$-cocoercive for any $\tau'\in{]0,\tau]}.$\label{l:constants_coco}
\item If $T$ is conically $\theta$-averaged, then it is conically $\theta'$-averaged for any $\theta'\in{[\theta,\infty[}$.\label{l:constants_conical}
\end{enumerate}
\end{lemma}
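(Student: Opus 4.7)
Both parts should follow by a direct monotonicity argument applied to the defining inequalities, with no real obstacle.

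For part \ref{l:constants_coco}, the plan is to start from the cocoercivity inequality
\[
\langle x-y,T(x)-T(y)\rangle \geq \tau\|T(x)-T(y)\|^2, \qquad \forall x,y\in D,
\]
and observe that the right-hand side is nonnegative. Since $0<\tau'\leq\tau$, we have $\tau\|T(x)-T(y)\|^2\geq \tau'\|T(x)-T(y)\|^2$, and the inequality persists with $\tau'$ in place of $\tau$, giving $\tau'$-cocoercivity.

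For part \ref{l:constants_conical}, the natural route is to invoke the characterization in \Cref{fact:conical_equiv}\ref{fact:conical_equivIII}. The key analytic observation is that the function $\theta\mapsto \tfrac{1-\theta}{\theta}=\tfrac{1}{\theta}-1$ is strictly decreasing on $]0,\infty[$, so $\theta'\in[\theta,\infty[$ yields $\tfrac{1-\theta'}{\theta'}\leq\tfrac{1-\theta}{\theta}$ (note this works uniformly whether the quantities are positive, zero, or negative). Since $\|(\Id-T)(x)-(\Id-T)(y)\|^2\geq 0$, we obtain
\[
\|T(x)-T(y)\|^2 \leq \|x-y\|^2 - \tfrac{1-\theta}{\theta}\|(\Id-T)(x)-(\Id-T)(y)\|^2 \leq \|x-y\|^2 - \tfrac{1-\theta'}{\theta'}\|(\Id-T)(x)-(\Id-T)(y)\|^2,
\]
and another application of \Cref{fact:conical_equiv} promotes this to conical $\theta'$-averagedness.

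There is no serious obstacle; the only mild subtlety is making sure the monotonicity of $\tfrac{1-\theta}{\theta}$ is exploited in the correct direction so that the inequality is preserved when the coefficient changes sign as $\theta$ crosses $1$. The sign cancellation is automatic once one writes the inequality as above, because the $\|(\Id-T)(x)-(\Id-T)(y)\|^2$ factor is nonnegative and the coefficients are compared as real numbers.
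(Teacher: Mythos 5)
Your proof is correct and takes essentially the same route as the paper, which disposes of \ref{l:constants_coco} directly from the definition of cocoercivity and of \ref{l:constants_conical} via the equivalence \ref{fact:conical_equivI} $\iff$ \ref{fact:conical_equivIII} in \Cref{fact:conical_equiv}; you have merely written out the monotonicity of $\theta\mapsto\tfrac{1-\theta}{\theta}$ and the nonnegativity of the squared norms, which the paper leaves implicit.
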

\begin{proof}
\Cref{l:constants_coco}: Follows immediately from the definition of cocoercivity. \Cref{l:constants_conical}: Follows from the equivalence \ref{fact:conical_equivI} $\iff$ \ref{fact:conical_equivIII} in \Cref{fact:conical_equiv}.  
\end{proof}

\begin{remark} We note that cocoercivity and conical averagedness generalize the notion of firm nonexpansiveness as follows:\label{rem:constants}
\begin{enumerate}[label={\rm(\roman*)}]
\item By Definition~\ref{def:nonexp}\ref{def:nonexp_firm} and~\ref{def:nonexp_coco}, the mapping $T$ is firmly nonexpansive if and only if $T$ is $1$-cocoercive. Consequently, by employing \Cref{l:constants}\Cref{l:constants_coco}, we see that whenever $\tau\geq 1$, a $\tau$-cocoercive mapping is firmly nonexpansive.\label{rem:constants_coco}
\item Similarly, the mapping $T$ is firmly nonexpansive if and only if it is conically $\frac{1}{2}$-averaged. Consequently, by employing \Cref{l:constants}\Cref{l:constants_conical}, we see that whenever $\theta\leq\frac{1}{2}$, a conically $\theta$-averaged mapping is firmly nonexpansive.\label{rem:constants_coni}
\end{enumerate}

\end{remark}

In our study we will employ the following generalized notions of monotonicity.

\begin{definition}\label{def:monotone}
Let $A:\Hi\tto\Hi$ and let $\alpha\in\RR$. Then $A$ is said to be
\begin{enumerate}[label={\rm(\roman*)}]

\item \emph{$\alpha$-monotone}, if \label{def:monotone_mono}
\begin{equation*}
\langle x-y,u-v\rangle\geq \alpha\|x-y\|^2,\quad\forall (x,u),(y,v)\in\gra A;
\end{equation*}
\item \emph{$\alpha$-comonotone}, if $A^{-1}$ is $\alpha$-monotone, i.e., \label{def:monotone_comono}
\begin{equation*}
\langle x-y,u-v\rangle\geq \alpha\|u-v\|^2,\quad\forall (x,u),(y,v)\in\gra A.
\end{equation*}
\end{enumerate}
The $\alpha$-monotone operator $A$ is said to be \emph{maximally $\alpha$-monotone} (resp. \emph{maximally $\alpha$\hyp{}comonotone}), if there is no $\alpha$-monotone (resp. $\alpha$-comonotone) operator $B:\Hi\tto\Hi$ such that $\gra A$ is properly contained in $\gra B$.
\end{definition}

\begin{remark}
Common notions of monotonicity are related to the notions in \Cref{def:monotone} as follows:
\begin{itemize}
\item In the case where $\alpha=0$, $0$-monotonicity and $0$-comonotonicity simply mean \emph{monotonicity} (see, for example, \cite[Definition~20.1]{BC17}). 
\item In the case where $\alpha>0$, $\alpha$-monotonicity is also known as \emph{$\alpha$-strong monotonicity} (see, for example, \cite[Definition~22.1(iv)]{BC17}). Similarly, $\alpha$\hyp{}comonotonicity is $\alpha$-cocoercivity in \Cref{def:nonexp}\Cref{def:nonexp_coco}.
\item In the case where $\alpha<0$, $\alpha$-monotonicity and $\alpha$-comonotonicity are also known as \emph{$\alpha$-hypomonotonicity} and \emph{$\alpha$-cohypomonotonicity}, respectively (see, for example, \cite[Definition 2.2]{CP04}). In addition, $\alpha$-monotonicity is referred to as \emph{$\alpha$-weak monotonicity} in \cite{DPadapt}.
\end{itemize}
\end{remark}

We continue our preliminary discussion by recalling the definition of the resolvent.
\begin{definition}\label{d:resol}
Let $A:\Hi\tto\Hi$. The \emph{resolvent} of $A$ is the operator defined by
\begin{equation*}
J_A:=(\Id+A)^{-1}.
\end{equation*}
The \emph{relaxed resolvent} of $A$ with parameter $\lambda>0$ is defined by
\begin{equation*}
J^{\lambda}_A:=(1-\lambda)\Id + \lambda J_A.
\end{equation*}
When $\lambda=2$, we set $R_A:=J^2_A=2J_A-\Id$, also known as the \emph{reflected resolvent} of $A$.
\end{definition}

We conclude our preliminary discussion by relating monotonicity and comonotonicity properties with corresponding properties for resolvents by recalling the following facts.

\begin{fact}[resolvents of monotone operators]\label{fact:resolvent_mono}
Let $A:\Hi\tto\Hi$ be $\alpha$\hyp{}monotone, where $\alpha\in\RR$. If $\gamma>0$ is such that $1+\gamma\alpha>0$, then 
\begin{enumerate}[label={\rm(\roman*)}]
\item $J_{\gamma A}$ is single-valued and $(1+\gamma\alpha)$-cocoercive; \label{fact:resolvent_mono_I}
\item $\dom J_{\gamma A}=\Hi$ if and only if $A$ is maximally $\alpha$-monotone. \label{fact:resolvent_mono_II}
\end{enumerate}
\end{fact}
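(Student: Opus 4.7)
For part (i), the plan is to unwind the definition of the resolvent and apply $\alpha$-monotonicity directly. Let $z_1, z_2 \in \dom J_{\gamma A}$ and pick any $p_i \in J_{\gamma A}(z_i)$ for $i=1,2$, so that $z_i - p_i \in \gamma A(p_i)$. Since $A$ is $\alpha$-monotone, $\gamma A$ satisfies $\langle x-y, u-v\rangle \geq \gamma\alpha\|x-y\|^2$ on its graph, which applied to $(p_i, z_i - p_i)$ yields
\[
\langle p_1 - p_2, (z_1 - p_1) - (z_2 - p_2)\rangle \geq \gamma\alpha\|p_1 - p_2\|^2.
\]
Rearranging gives $\langle p_1 - p_2, z_1 - z_2\rangle \geq (1+\gamma\alpha)\|p_1 - p_2\|^2$. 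Single-valuedness then follows by setting $z_1 = z_2$ and using $1 + \gamma\alpha > 0$, and the displayed inequality becomes precisely the $(1+\gamma\alpha)$-cocoercivity of $J_{\gamma A}$ in the sense of \Cref{def:nonexp}\Cref{def:nonexp_coco}.

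For part (ii), the plan is to handle the two implications separately. For the ``only if'' direction, I would argue by contraposition: suppose $A$ is $\alpha$-monotone but not maximal, so there exists $(x_0, u_0) \in \Hi \times \Hi \setminus \gra A$ with $\langle x_0 - y, u_0 - v\rangle \geq \alpha\|x_0 - y\|^2$ for all $(y,v) \in \gra A$. If $\dom J_{\gamma A} = \Hi$, then $x_0 + \gamma u_0 \in \ran(\Id + \gamma A)$, so there is some $p \in \Hi$ with $x_0 + \gamma u_0 - p \in \gamma A(p)$. Using the single-valuedness and cocoercivity from part (i), I expect to identify $p = x_0$, forcing $u_0 \in A(x_0)$, a contradiction. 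For the ``if'' direction (the genuinely nontrivial Minty-type half), the idea is to invoke an extension/Zorn argument combined with the cocoercivity of part (i) to show that maximal $\alpha$-monotonicity forces $\ran(\Id + \gamma A) = \Hi$. The main obstacle here is supplying the Minty-type surjectivity for the generalized ($\alpha$-monotone with $\alpha$ possibly negative) setting, which is where a citation to the existing literature (e.g., results on hypomonotone or $\alpha$-monotone operators) is essentially unavoidable; beyond that, both directions are purely formal manipulations once part (i) is in hand.
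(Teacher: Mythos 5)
The paper offers no proof of this fact at all---it simply cites \cite{DPadapt}---so your proposal is by construction a different route: a direct, essentially self-contained argument. Your part (i) is complete and correct: unwinding $z_i-p_i\in\gamma A(p_i)$ and applying $\alpha$-monotonicity of $A$ (hence $\gamma\alpha$-monotonicity of $\gamma A$) gives exactly $\langle p_1-p_2,\,z_1-z_2\rangle\geq(1+\gamma\alpha)\|p_1-p_2\|^2$, which yields single-valuedness from $1+\gamma\alpha>0$ and is precisely the claimed cocoercivity. In the ``only if'' direction of part (ii), the conclusion is right but one step needs repair: you cannot use the single-valuedness of $J_{\gamma A}$ itself to force $p=x_0$, because $(x_0,u_0)\notin\gra A$, so $x_0$ is not known to lie in $J_{\gamma A}(x_0+\gamma u_0)$. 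Either apply part (i) to the extended $\alpha$-monotone operator $B$ with $\gra B=\gra A\cup\{(x_0,u_0)\}$, noting $x_0+\gamma u_0\in(\Id+\gamma B)(x_0)\cap(\Id+\gamma B)(p)$, or compute directly: with $v_0:=\gamma^{-1}(x_0+\gamma u_0-p)\in A(p)$ one has $u_0-v_0=\gamma^{-1}(p-x_0)$, so the monotonicity relation between $(x_0,u_0)$ and $(p,v_0)$ gives $0\geq\gamma^{-1}(1+\gamma\alpha)\|x_0-p\|^2$, whence $p=x_0$ and $u_0\in A(x_0)$, a contradiction. For the Minty half you defer to the literature, which is legitimate (the paper defers the entire fact), but no new surjectivity theorem is actually needed: $A$ is maximally $\alpha$-monotone if and only if $A-\alpha\Id$ is maximally monotone, and
\[
\Id+\gamma A=(1+\gamma\alpha)\left(\Id+\tfrac{\gamma}{1+\gamma\alpha}\,(A-\alpha\Id)\right),\qquad \tfrac{\gamma}{1+\gamma\alpha}>0,
\]
so the classical Minty theorem \cite{Minty} applied to $A-\alpha\Id$ already gives $\ran(\Id+\gamma A)=\Hi$. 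With that shift observation your argument becomes fully self-contained, which is more than the paper's citation-only treatment provides.
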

\begin{proof}
See~\cite[Lemma~3.3(ii) and Proposition~3.4]{DPadapt}.
\end{proof}

\begin{fact}[resolvents of comonotone operators]\label{fact:resolvent_comono}
Let $A:\Hi\tto\Hi$ be $\alpha$\hyp{}comonotone, where $\alpha\in\RR$. If $\gamma>0$ is such that $\gamma+\alpha>0$, then 
\begin{enumerate}[label={\rm(\roman*)}]
\item $J_{\gamma A}$ is at most single-valued and conically $\frac{\gamma}{2(\gamma+\alpha)}$-averaged; \label{fact:resolvent_comono_I}
\item $\dom J_{\gamma A}=\Hi$ if and only if $A$ is maximally $\alpha$-comonotone. \label{fact:resolvent_comono_II}
\end{enumerate}
\end{fact}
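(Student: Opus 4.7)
My plan is to derive (i) by a direct algebraic manipulation of the comonotonicity inequality combined with the characterization of conical averagedness from \Cref{fact:conical_equiv}\Cref{fact:conical_equivIII}, and to reduce (ii) to the corresponding monotone statement \Cref{fact:resolvent_mono}\Cref{fact:resolvent_mono_II} via an inverse resolvent identity.

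For (i), I would first note that if $A$ is $\alpha$-comonotone then $\gamma A$ is $(\alpha/\gamma)$-comonotone, which follows by direct substitution in the defining inequality. Set $T:=J_{\gamma A}$, and for $p,q\in\dom T$ let $x:=Tp$, $y:=Tq$, so that $p-x\in\gamma A(x)$ and $q-y\in\gamma A(y)$. Applying the comonotonicity inequality for $\gamma A$ to these pairs yields
\[
\langle Tp-Tq,\,(\Id-T)p-(\Id-T)q\rangle \;\geq\; \frac{\alpha}{\gamma}\,\big\|(\Id-T)p-(\Id-T)q\big\|^2.
\]
Writing $a:=Tp-Tq$ and $b:=(\Id-T)p-(\Id-T)q$, so that $a+b=p-q$, the identity $\|p-q\|^2 = \|a\|^2+2\langle a,b\rangle+\|b\|^2$ together with the above bound on $\langle a,b\rangle$ produces $\|a\|^2 \le \|p-q\|^2 - \bigl(1+\tfrac{2\alpha}{\gamma}\bigr)\|b\|^2$. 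Matching this against \Cref{fact:conical_equiv}\Cref{fact:conical_equivIII} via the equation $(1-\theta)/\theta = 1+2\alpha/\gamma$ gives exactly $\theta = \gamma/(2(\gamma+\alpha))$, which is positive by the assumption $\gamma+\alpha>0$. Single-valuedness follows by specializing to $p=q$: then $b=-a$ and the inequality becomes $-\|a\|^2 \geq (\alpha/\gamma)\|a\|^2$, i.e., $((\gamma+\alpha)/\gamma)\|a\|^2 \le 0$, which forces $a=0$.

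For (ii), I would invoke the inverse resolvent identity
\[
J_{\gamma A}(x) \;=\; x - \gamma\, J_{\gamma^{-1} A^{-1}}\!\bigl(\gamma^{-1}x\bigr),
\]
which is an easy exercise from the definitions: $y \in J_{\gamma A}(x)$ iff $(x-y)/\gamma \in A(y)$ iff $y \in A^{-1}((x-y)/\gamma)$, and setting $u = (x-y)/\gamma$ one checks that $u \in J_{\gamma^{-1}A^{-1}}(\gamma^{-1}x)$. The correspondence $y \leftrightarrow u$ is bijective, so $\dom J_{\gamma A} = \Hi$ if and only if $\dom J_{\gamma^{-1}A^{-1}} = \Hi$. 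Since $A$ is $\alpha$-comonotone exactly when $A^{-1}$ is $\alpha$-monotone, and since proper graph extensions of $A$ correspond bijectively (via inversion of graphs) to proper graph extensions of $A^{-1}$, maximality of $A$ as an $\alpha$-comonotone operator is equivalent to maximality of $A^{-1}$ as an $\alpha$-monotone operator. The parameter condition $\gamma+\alpha>0$ rewrites as $1+\gamma^{-1}\alpha>0$, so \Cref{fact:resolvent_mono}\Cref{fact:resolvent_mono_II} applied to $A^{-1}$ with parameter $\gamma^{-1}$ furnishes the desired equivalence.

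The main obstacle is not analytical depth but careful bookkeeping: verifying the inverse resolvent identity in the set-valued setting without gaining or losing points, and tracking that maximality under inversion respects the correct class (an $\alpha$-comonotone extension of $A$ produces an $\alpha$-monotone extension of $A^{-1}$ and conversely). Once these routine checks are in hand, (ii) reduces directly to \Cref{fact:resolvent_mono}, while (i) is purely algebraic once one identifies the correct $\theta$.
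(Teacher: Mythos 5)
Your proof is correct. Note that the paper does not actually prove this fact in-line --- it simply cites \cite{BDP19} (Propositions~3.7 and~3.8(i)) --- and your two arguments, namely expanding $\|p-q\|^2=\|a\|^2+2\langle a,b\rangle+\|b\|^2$ against the characterization in \Cref{fact:conical_equiv}\Cref{fact:conical_equivIII} for (i), and reducing (ii) to \Cref{fact:resolvent_mono}\Cref{fact:resolvent_mono_II} via the inverse resolvent identity and the graph-inversion correspondence between maximal $\alpha$-comonotonicity of $A$ and maximal $\alpha$-monotonicity of $A^{-1}$, are precisely the standard derivations underlying those cited results, with the bookkeeping (single-valuedness from the $p=q$ case, and the equivalence $1+\gamma^{-1}\alpha>0\iff\gamma+\alpha>0$) handled correctly.
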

\begin{proof}
See~\cite[Propositions~3.7 and 3.8(i)]{BDP19}.
\end{proof}

\section{New demiclosedness principles for generalized nonexpansive mappings}\label{sec:demiclosed}

In this section, we extend the multi-operator demiclosedness principles for firmly nonexpansive mappings from~\cite{Demiclosed}  to more general families of operators. To this end, we employ and extend techniques and results from~\cite{Demiclosed} in a weighted inner product space. We begin our discussion by recalling the following fact.

\begin{fact}\label{fact:Demiclosed}
Let $F:\Hi\to\Hi$ be a firmly nonexpansive mapping, let $\seq{x_k}{k}$ be a sequence in $\Hi$, and let $C,D\subseteq\Hi$ be closed affine subspaces such that {$C-C=(D-D)^\perp$}. Suppose that
\begingroup
\allowdisplaybreaks
\begin{subequations}\label{eq:dc_conditions}
 	\begin{align}
 	x_k\wto x,& \label{eq:dc1} \\
 	F(x_k)\wto y,& \label{eq:dc2} \\
 	F(x_k)-P_C(F(x_k))\to 0,& \label{eq:dc3} \\
 	(x_k-F(x_k))-P_D(x_k-F(x_k))\to 0.& \label{eq:dc4}
 	\end{align}
\end{subequations}
\endgroup
Then $y\in C$, $x\in y+D$, and $y=F(x)$.
\end{fact}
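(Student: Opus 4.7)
The plan is to split the proof into two stages: (i) establishing $y\in C$ and $x\in y+D$ from the asymptotic containment conditions \eqref{eq:dc3}--\eqref{eq:dc4}, and (ii) proving $y=F(x)$ by combining firm nonexpansiveness of $F$ with the orthogonality $L_C\perp L_D$, where $L_C:=C-C$ and $L_D:=D-D$ (so $L_C=L_D^\perp$ and hence $L_C\cap L_D=\{0\}$).

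For (i), I would use that the projector $P_C$ onto a closed affine subspace is affine and continuous, hence weakly sequentially continuous. From \eqref{eq:dc2}--\eqref{eq:dc3}, $P_C(F(x_k))=F(x_k)-(F(x_k)-P_C(F(x_k)))\wto y$; since $P_C(F(x_k))\in C$ and $C$ is weakly closed, $y\in C$. Applying the same argument to $x_k-F(x_k)\wto x-y$ together with \eqref{eq:dc4} yields $x-y\in D$.

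For (ii), starting from the firm nonexpansiveness characterization $\langle x_k-x,F(x_k)-F(x)\rangle\ge\|F(x_k)-F(x)\|^2$ and writing $x_k-x=(F(x_k)-F(x))+((x_k-F(x_k))-(x-F(x)))$, one obtains
$$\bigl\langle (x_k-F(x_k))-(x-F(x)),\,F(x_k)-F(x)\bigr\rangle\ge 0.$$
With $p:=P_C F(x)$ and $q:=P_D(x-F(x))$, I would decompose each factor into its $L_C$- and $L_D$-components:
\begin{align*}
F(x_k)-F(x)&=\underbrace{(P_C F(x_k)-p)}_{\alpha_k\in L_C}+\underbrace{(F(x_k)-P_C F(x_k))-(F(x)-p)}_{\beta_k\in L_D},\\
(x_k-F(x_k))-(x-F(x))&=\underbrace{(P_D(x_k-F(x_k))-q)}_{\gamma_k\in L_D}+\underbrace{((x_k-F(x_k))-P_D(x_k-F(x_k)))-((x-F(x))-q)}_{\delta_k\in L_C}.
\end{align*}
The cross terms $\langle\gamma_k,\alpha_k\rangle$ and $\langle\delta_k,\beta_k\rangle$ vanish by orthogonality, leaving $\langle\gamma_k,\beta_k\rangle+\langle\delta_k,\alpha_k\rangle\ge 0$. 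Using \eqref{eq:dc3}--\eqref{eq:dc4} and the weak continuity of $P_C,P_D$ established in (i), one passes to limits: $\beta_k\to -r$ and $\delta_k\to -s$ strongly, $\alpha_k\wto v$ and $\gamma_k\wto w$ weakly, where $r:=F(x)-p\in L_D$, $s:=(x-F(x))-q\in L_C$, $v:=y-p\in L_C$, $w:=(x-y)-q\in L_D$.

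The decisive step is the algebraic identity $s-v=w-r$, verified by direct expansion since both sides equal $x-y-F(x)+p-q$. Since $s-v\in L_C$ and $w-r\in L_D$ and $L_C\cap L_D=\{0\}$, both differences vanish, so $s=v$ and $w=r$. Passing to the limit in $\langle\gamma_k,\beta_k\rangle+\langle\delta_k,\alpha_k\rangle\ge 0$ and substituting gives $-\|r\|^2-\|v\|^2\ge 0$, forcing $v=r=0$, i.e., $F(x)=p\in C$ and $y=p=F(x)$. The main obstacle I anticipate is the bookkeeping of the $L_C$--$L_D$ decomposition and spotting the collapsing identity $s-v=w-r$: this is the mechanism through which the hypothesis $C-C=(D-D)^\perp$ reduces a limit mixing four weak/strong convergences to a clean sum of squared norms.
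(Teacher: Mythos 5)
Your proof is correct: part (i) is the standard weak-closedness argument, and part (ii) correctly extracts $\langle (\Id-F)x_k-(\Id-F)x,\,Fx_k-Fx\rangle\ge 0$ from firm nonexpansiveness, splits both factors along the orthogonal directions $C-C$ and $D-D$, passes to the limit via weak--strong pairing, and the collapsing identity $s-v=w-r\in L_C\cap L_D=\{0\}$ does indeed reduce the limit inequality to $-\|r\|^2-\|v\|^2\ge 0$. The paper itself does not prove this Fact but defers to the cited reference, whose argument proceeds in essentially the same way, so your proposal is a faithful self-contained reconstruction and needs no changes.
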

\begin{proof}
See~\cite[Corollary~2.7]{Demiclosed}.
\end{proof}

\subsection{Demiclosedness principles for cocoercive operators}
Let $n\geq 2$ be an integer and set $\bs\omega:=(\omega_1,\omega_2,\ldots,\omega_n)\in\mathbb{R}_{++}^n$.
We equip the space
\begin{subequations}\label{e:Hn}
\begin{align}
\bs\Hi:=\Hi^n=\underbrace{\Hi\times\Hi\times \cdots\times\Hi}_{n},
\end{align}
with the weighted inner product $\langle \cdot, \cdot \rangle_{\bs\omega}$ defined by
\begin{align}
\langle \bs x, \bs y \rangle_{\bs\omega}:=  \sum_{i=1}^n \omega_i\langle x_i,y_i\rangle,\quad \forall \bs x=(x_1,x_2,\ldots,x_n), \bs y=(y_1,y_2,\ldots,y_n)\in\bs\Hi.
\end{align}
\end{subequations}
Thus, $\bs\Hi$ is a Hilbert space with the induced norm 
$\|\bs x\|_{\bomega}=\sqrt{\langle\bs x,\bs x\rangle_{\bomega}}$.

Let $\bs\tau:=(\tau_1,\tau_2,\ldots,\tau_n)\in\RR^n\smallsetminus\big\{(0,\ldots,0)\big\}$. Let $\bC\subset\bs\Hi$ be the subspace defined by 
\begin{equation*}\label{eq:Ct}
\bC:=\big\{ (\tau_1x,\tau_2x,\ldots,\tau_nx) : x\in\Hi \big\}.
\end{equation*}
In the following lemma, we provide a formula for the projector $P_{\bC}$, which will be useful later.
\begin{lemma}\label{l:ProjD}
Let $\bx:=(x_1,x_2,\ldots,x_n)\in\bs\Hi$. Then the projection of $\bx$ onto $\bC$ is 
\begin{equation}\label{eq:PCt}
P_{\bC}(\bx)=
\left(\tau_1 \overline{u},\tau_2 \overline{u},\ldots,\tau_n \overline{u}\right),
\quad\text{where~}\overline{u}:=\frac{\sum_{i=1}^n \omega_i\tau_ix_i}{\sum_{i=1}^n \omega_i\tau_i^2}.
\end{equation}
\end{lemma}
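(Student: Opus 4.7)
The plan is to use the standard characterization of projection onto a closed subspace: since $\bC$ is the range of the (bounded, injective because $\bs\tau\neq 0$) linear map $\phi:\Hi\to\bs\Hi$, $\phi(x):=(\tau_1 x,\ldots,\tau_n x)$, it is a closed subspace of $\bs\Hi$. Hence $P_{\bC}(\bx)$ is the unique point $\bs p\in\bC$ such that $\bx-\bs p\perp_{\bs\omega}\bC$, where orthogonality is taken with respect to the weighted inner product $\langle\cdot,\cdot\rangle_{\bs\omega}$.

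First I would write the candidate $\bs p:=(\tau_1\overline{u},\ldots,\tau_n\overline{u})$ with $\overline{u}$ as in \eqref{eq:PCt}; this is well-defined because $\sum_{i=1}^n\omega_i\tau_i^2>0$ (as $\bs\omega\in\RR_{++}^n$ and $\bs\tau\neq 0$), and clearly $\bs p\in\bC$. Then I would verify the orthogonality condition: for every $z\in\Hi$,
\[
\big\langle \bx-\bs p,\phi(z)\big\rangle_{\bs\omega}
=\sum_{i=1}^n\omega_i\tau_i\langle x_i-\tau_i\overline{u},z\rangle
=\Big\langle \sum_{i=1}^n\omega_i\tau_i x_i-\overline{u}\sum_{i=1}^n\omega_i\tau_i^2,\,z\Big\rangle=0,
\]
where the last equality follows directly from the definition of $\overline{u}$. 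Since every element of $\bC$ is of the form $\phi(z)$, this shows $\bx-\bs p\perp_{\bs\omega}\bC$, and uniqueness of the projection onto a closed subspace gives $P_{\bC}(\bx)=\bs p$.

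Alternatively, and perhaps more transparently, one could simply minimize the squared distance
\[
\|\bx-\phi(u)\|_{\bs\omega}^2=\sum_{i=1}^n\omega_i\|x_i-\tau_i u\|^2
\]
over $u\in\Hi$, expand, and observe that this is a strictly convex quadratic in $u$ whose unique minimizer is obtained from the first-order condition $\sum_i\omega_i\tau_i(x_i-\tau_i u)=0$, yielding exactly $u=\overline{u}$.

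There is essentially no obstacle: the only point requiring brief justification is that $\bC$ is indeed a \emph{closed} subspace of the weighted Hilbert space $\bs\Hi$ (so that $P_{\bC}$ exists and is characterized by the orthogonality condition above), which follows because $\phi$ is a linear isomorphism onto its range with closed graph, or equivalently because $\phi$ is bounded below in the weighted norm: $\|\phi(x)\|_{\bs\omega}^2=(\sum_i\omega_i\tau_i^2)\|x\|^2$.
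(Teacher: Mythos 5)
Your proof is correct and takes essentially the same route as the paper: both verify that $\bx-\bs p$ is orthogonal (in $\langle\cdot,\cdot\rangle_{\bs\omega}$) to every element $(\tau_1 v,\ldots,\tau_n v)$ of $\bC$ via the identical computation, and conclude by the characterization of the projection. Your explicit remark that $\bC$ is closed (via $\|\phi(x)\|_{\bs\omega}^2=(\sum_i\omega_i\tau_i^2)\|x\|^2$) is a small but welcome addition that the paper leaves implicit.
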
 
\begproof
Fix $\bx:=(x_1,x_2,\ldots,x_n)\in\bs\Hi$. Since $\sum_{i=1}^{n}\omega_i\tau_i^2>0$, $\overline{u}$ is well defined by \eqref{eq:PCt}. Set $\by=(\tau_1 \overline{u},\tau_2 \overline{u},\ldots,\tau_n \overline{u})$. We prove that $(\bs x-\bs y)\perp\bs z$ for each $\bs z\in\bC$, consequently, $P_{\bC}(\bx) = \by$. Indeed, let $\bs z=(\tau_1 v,\tau_2 v,\ldots,\tau_n v)\in \bC$. Then
\begin{flalign*}
&&\langle \bs z, \bs x-\bs y \rangle_\bomega
&=\sum_{i=1}^{n}\omega_i\langle \tau_iv, x_i-\tau_i \overline{u}\rangle\\
&& &=\left\langle v, \sum_{i=1}^{n} \omega_i\tau_i(x_i-\tau_i \overline{u})\right\rangle\\
&& &=\left\langle v, \sum_{i=1}^{n} \omega_i\tau_i x_i- \bigg(\sum_{i=1}^{n} \omega_i\tau_i^2 \bigg)\overline{u}\right\rangle
=\langle v, 0\rangle =0.
&& \endproof
\end{flalign*}

\begin{theorem}[demiclosednes principle for cocoercive operators]\label{t:demi_coco}
Let $(\rho_1,\rho_2,\ldots,\rho_n)$, $(\tau_1,\tau_2,\ldots,\tau_n)\in\mathbb{R}_{++}^n$. For each  $i\in\{1,2,\ldots,n\}$, let $F_i:\Hi\to\Hi$ be $\tau_i$-cocoercive and let $\seq{x_{i,k}}{k}$ be a sequence in $\Hi$.
 Suppose that
\begin{subequations}\label{e:t200313a}
 	\begin{align}
 	\forall i\in \{1,2,\ldots,n\}, \qquad x_{i,k}\wto x_i,\label{e:t200313a-a} \\
 	\forall i\in \{1,2,\ldots,n\}, \qquad F_i(x_{i,k})\wto y, \label{e:t200313a-b} \\
 	\sum_{i=1}^n\rho_i(x_{i,k}-\tau_iF_i(x_{i,k}))
 	\to
 	-\left(\sum_{i=1}^n\rho_i\tau_i\right)y+\sum_{i=1}^n\rho_ix_i,&\label{e:t200313a-c} \\
 	\forall i, j\in \{1,2,\ldots,n\},\qquad F_i(x_{i,k})-F_j(x_{j,k})\to 0.& \label{e:t200313a-d}
 	\end{align}
\end{subequations}
Then $F_1(x_1)=F_2(x_2)=\cdots=F_n(x_n)=y$.
\end{theorem}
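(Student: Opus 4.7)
The plan is to reduce the $n$-operator statement to the single-operator principle in \Cref{fact:Demiclosed} by lifting everything to the weighted product space $\bs\Hi=\Hi^n$ from~\eqref{e:Hn}, with the carefully chosen weights $\bs\omega$ given by $\omega_i:=\rho_i/\tau_i$. This identification ensures $\omega_i\tau_i=\rho_i$, which is the algebraic ingredient that converts hypothesis~\eqref{e:t200313a-c} into a statement about the projection formula of \Cref{l:ProjD}.

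First I would define $\bF:\bs\Hi\to\bs\Hi$ by $\bF(\bx):=(\tau_1F_1(x_1),\ldots,\tau_nF_n(x_n))$ and observe that, since each $\tau_iF_i$ is firmly nonexpansive on $\Hi$ by the definition of $\tau_i$-cocoercivity, $\bF$ is firmly nonexpansive on $(\bs\Hi,\langle\cdot,\cdot\rangle_{\bs\omega})$: the defining inequality of firm nonexpansiveness is additive across coordinates with arbitrary positive weights. With $\bx_k:=(x_{1,k},\ldots,x_{n,k})$ and $\bx:=(x_1,\ldots,x_n)$, hypothesis~\eqref{e:t200313a-a} yields $\bx_k\wto\bx$ in $\bs\Hi$, and~\eqref{e:t200313a-b} yields $\bF(\bx_k)\wto\by$ where $\by:=(\tau_1y,\ldots,\tau_ny)$; note that $\by$ already lies in the diagonal subspace $\bC$ attached to $\bs\tau$ as in~\eqref{eq:Ct}.

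Next I would fix the two affine subspaces required by \Cref{fact:Demiclosed}. For the first I take $\bC$ itself: by \Cref{l:ProjD}, $\bF(\bx_k)-P_{\bC}(\bF(\bx_k))$ has $i$-th component $\tau_i\bigl(F_i(x_{i,k})-\bar u_k\bigr)$ where $\bar u_k$ is a convex combination of the $F_j(x_{j,k})$'s with positive weights summing to one, so hypothesis~\eqref{e:t200313a-d} forces this residual to zero strongly. For the second, since $\bC$ is a linear subspace the condition $\bC-\bC=(\bD-\bD)^\perp$ forces $\bD-\bD=\bC^\perp$, but $\bD$ itself must be an appropriate \emph{translate} of $\bC^\perp$. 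Applying \Cref{l:ProjD} to $\bx_k-\bF(\bx_k)$, its $\bC$-component is $(\tau_1\bar v_k,\ldots,\tau_n\bar v_k)$ with
\[
\bar v_k=\frac{\sum_{i=1}^n\omega_i\tau_i\bigl(x_{i,k}-\tau_iF_i(x_{i,k})\bigr)}{\sum_{i=1}^n\omega_i\tau_i^2}=\frac{\sum_{i=1}^n\rho_i\bigl(x_{i,k}-\tau_iF_i(x_{i,k})\bigr)}{\sum_{i=1}^n\rho_i\tau_i},
\]
and by~\eqref{e:t200313a-c} this converges strongly to a scalar $\bar v\in\Hi$. I would therefore set $\bD:=(\tau_1\bar v,\ldots,\tau_n\bar v)+\bC^\perp$; with this offset, $(\bx_k-\bF(\bx_k))-P_{\bD}(\bx_k-\bF(\bx_k))=P_{\bC}(\bx_k-\bF(\bx_k))-(\tau_1\bar v,\ldots,\tau_n\bar v)=(\tau_1(\bar v_k-\bar v),\ldots,\tau_n(\bar v_k-\bar v))\to 0$ strongly.

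With all four hypotheses of \Cref{fact:Demiclosed} verified, its conclusion gives $\by=\bF(\bx)$, i.e., $\tau_iy=\tau_iF_i(x_i)$ for every $i$, which delivers $F_1(x_1)=\cdots=F_n(x_n)=y$. The main obstacle I anticipate is precisely the choice of the affine subspace $\bD$: the naive choice $\bD=\bC^\perp$ fails because the $\bC$-component of $\bx_k-\bF(\bx_k)$ has a nonzero strong limit in general, and converting the combination appearing in~\eqref{e:t200313a-c} into the projection numerator supplied by \Cref{l:ProjD} requires exactly the weight matching $\omega_i=\rho_i/\tau_i$.
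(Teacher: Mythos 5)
Your proposal is correct and follows essentially the same route as the paper: the same weights $\omega_i=\rho_i/\tau_i$, the same lifted firmly nonexpansive operator $\bF$, the same subspace $\bC$, and the same verification of the four hypotheses of \Cref{fact:Demiclosed}. Your affine subspace $\bD=(\tau_1\bar v,\ldots,\tau_n\bar v)+\bC^\perp$ is the same set as the paper's $\bD=\bx-\by+\bC^\perp$ (the two offsets differ by an element of $\bC^\perp$), so the arguments coincide.
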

\begin{proof}
For each $i\in\{1,2,\ldots,n\}$, set 
\begin{equation*}
\omega_i:=\frac{\rho_i}{\tau_i} 
\end{equation*}
and equip $\bs\Hi$ with the inner product $\langle \cdot, \cdot \rangle_{\bs\omega}$ as in \eqref{e:Hn}.
Let $\bF:\bs\Hi\to\bs\Hi$ be the mapping defined by
\begin{equation*}
\bF(\bs z)=(\tau_1F_1(z_1),\tau_2F_2(z_2),\ldots,\tau_nF_n(z_n)), \quad \bs z=(z_1,z_2,\ldots,z_n)\in \bs\Hi.
\end{equation*}
Then for every $\bu=(u_1,u_2,\ldots,u_n),\bv=(v_1,v_2,\ldots,v_n)\in\bs\Hi$, since $F_i$ is $\tau_i$ cocoercive, it follows that
\begin{equation*}
\begin{aligned}
\big\langle\bu-\bv,\bF(\bu)-\bF(\bv)\big\rangle_\bomega
&=\sum_{i=1}^{n}\omega_i\langle u_i-v_i,\tau_i F_i(u_i) - \tau_i F_i(v_i)\rangle\\
&\geq \sum_{i=1}^{n}\omega_i\tau_i^2\|F_i(u_i) - F_i(v_i)\|^2
=\|\bF(\bu)-\bF(\bv)\|_\bomega^2,
\end{aligned}
\end{equation*}
which implies that $\bF$ is firmly nonexpansive. 
Set  $\bs x:=(x_1,x_2,\ldots,x_n),\ \bs y:=(\tau_1y,\tau_2y,\ldots,\tau_ny)$ and, for each $k=0,1,2,\ldots$, set $\bs x_k:=(x_{1,k},x_{2,k},\ldots,x_{n,k})$. Then 
\begin{equation}\label{e:t200313b}
\bx_k \wto \bx
\text{~~and~~}
\bF(\bx_k)\wto \by.
\end{equation}
Let $\bC$ and $\bD$ be the affine subspaces of $\bs\Hi$ defined by
\begin{equation*}
\bC:=\left\{ (\tau_1x,\tau_2x,\ldots,\tau_nx) : x\in\Hi \right\}
\text{~~and~~}
\bD:= \bx - \by +\bC^\perp.
\end{equation*}
Then $\bC-\bC=(\bD-\bD)^\perp$. Consequently, by employing \cref{l:ProjD}, we arrive at
\begin{equation*}
P_{\bs C}\left( \bF (\bs x_k) \right)
=(\tau_1 \overline{v}_k,\ldots, \tau_n \overline{v}_k),\quad
\text{where }
\overline{v}_k:=\frac{\sum_{i=1}^{n}\omega_i\tau_i^2 F_i(x_{i,k})}{\sum_{i=1}^{n}\omega_i\tau_i^2}
=\frac{\sum_{i=1}^{n}\rho_i\tau_i F_i(x_{i,k})}{\sum_{i=1}^{n}\rho_i\tau_i}.
\end{equation*}
Since $\overline{v}_k$ is a weighted average of the $F_i(x_{i,k})$'s, \eqref{e:t200313a-d} implies that
\begin{equation*}
\forall i\in\{1,2,\ldots,n\},\quad
F_i(x_{i,k})-\overline{v}_k\to 0\quad\text{as}\quad k\to\infty,
\end{equation*}
consequently, we conclude that  
\begin{equation}\label{e:t200313c}
\bF(\bx_k)-P_{\bs C}\left( \bF (\bs x_k) \right) \to 0.
\end{equation}
We now employ the projections
\begin{equation*}
P_{\bs C}\left(\bx_k - \bF(\bx_k)\right)
=(\tau_1\overline{u}_k,\ldots, \tau_n\overline{u}_k),\quad
\text{where~}
\overline{u}_k=\frac{\sum_{i=1}^n \rho_i\left(x_{i,k}- \tau_iF_i(x_{i,k})\right)}{\sum_{i=1}^{n}\rho_i\tau_i},
\end{equation*}
and
\begin{equation*}
P_{\bs C}\left(\bx - \by\right)
=(\tau_1\overline{u},\ldots,\tau_n\overline{u})-\by,\quad
\text{where~}
\overline{u}=\frac{\sum_{i=1}^n \rho_i x_{i}}{\sum_{i}^{n}\rho_i\tau_i}.
\end{equation*}
By invoking \eqref{e:t200313a-c}, we see that $\overline{u}_k\to -y+\overline{u}$, which, in turn, implies that
\begin{equation*}
P_{\bs C}\left(\bs x_k - \bF(\bs x_k)\right) \to P_{\bs C}\left(\bs x - \bs y\right).
\end{equation*}
Consequently,
\begin{equation}\label{e:t200313d}
\begin{aligned}
\bx_k - \bF(\bx_k)- &P_{\bD}\left( \bx_k - \bF(\bx_k) \right)\\
& = \bx_k - \bF(\bx_k)- P_{\bx - \by +\bC^\perp}\left( \bx_k - \bF(\bx_k) \right)\\
& = \bx_k - \bF(\bx_k)- \left(\bx - \by + P_{\bC^\perp}\left( \bx_k - \bF(\bx_k)-(\bx - \by ) \right)\right)\\
& = (\Id- P_{\bC^\perp})\left(\bx_k - \bF(\bx_k)\right)-(\Id- P_{\bC^\perp})\left(\bx - \by\right)\\ 
& = P_{\bC}\left(\bx_k - \bF(\bx_k)\right)-P_{\bC}\left(\bx - \by\right)\to 0.
\end{aligned}
\end{equation}
Finally, since \eqref{e:t200313b}, \eqref{e:t200313c} and \eqref{e:t200313d} satisfy \eqref{eq:dc_conditions}, we may employ \Cref{fact:Demiclosed} in order to obtain $\by = \bs{F}(\bx)$, that is,
\begin{equation*}
F_i(x_i)=y,\quad\forall i\in\{1,2,\ldots,n\},
\end{equation*}
which concludes the proof.
\end{proof}

As a consequence of \cref{t:demi_coco}, we obtain the demiclosedness principle for firmly nonexpansive operators \cite[Theorem~2.10]{Demiclosed}.
\begin{corollary}[demiclosedness principle for firmly nonexpansive operators]\label{t:demi_fne}
For each ${i\in\{1,2,\ldots,n\}}$ let $F_i:\Hi\to\Hi$ be firmly nonexpansive and let $\seq{x_{i,k}}{k}$ be a sequence in $\Hi$. Suppose further that
\begingroup
\allowdisplaybreaks
\begin{subequations}\label{eq:dc_conditions_firm}
 	\begin{align}
 	\forall i\in \{1,2,\ldots,n\}, \qquad x_{i,k}\wto x_i,\label{eq:dc1_firm} \\
 	\forall i\in \{1,2,\ldots,n\}, \qquad F_i(x_{i,k})\wto y, \label{eq:dc2_firm} \\
 	\sum_{i=1}^n(x_{i,k}-F_i(x_{i,k}))\to -ny+\sum_{i=1}^nx_i,& \label{eq:dc3_firm} \\
 	\forall i, j\in \{1,2,\ldots,n\}, \qquad F_i(x_{i,k})-F_j(x_{j,k})\to 0.& \label{eq:dc4_firm}
 	\end{align}
\end{subequations}
\endgroup
Then $F_1(x_1)=F_2(x_2)=\cdots=F_n(x_n)=y$.
\end{corollary}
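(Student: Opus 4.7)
The plan is to derive the corollary as a direct specialization of \Cref{t:demi_coco} by choosing all parameters equal to one. Specifically, I would set $\rho_i = 1$ and $\tau_i = 1$ for every $i \in \{1, 2, \ldots, n\}$.

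First, I would invoke \Cref{rem:constants}\Cref{rem:constants_coco}: since $F_i$ is firmly nonexpansive, it is $1$-cocoercive. Thus each $F_i$ qualifies as a $\tau_i$-cocoercive mapping with $\tau_i = 1$, so the cocoercivity hypothesis of \Cref{t:demi_coco} is satisfied.

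Next, I would check that the four conditions \eqref{eq:dc_conditions_firm} of the corollary match the four conditions \eqref{e:t200313a} of \Cref{t:demi_coco} under these parameter choices. Conditions \eqref{eq:dc1_firm}, \eqref{eq:dc2_firm}, and \eqref{eq:dc4_firm} are identical to \eqref{e:t200313a-a}, \eqref{e:t200313a-b}, and \eqref{e:t200313a-d}, respectively. For condition \eqref{e:t200313a-c}, substituting $\rho_i = \tau_i = 1$ yields $\sum_{i=1}^n \rho_i \tau_i = n$ and reduces the left-hand side to $\sum_{i=1}^n (x_{i,k} - F_i(x_{i,k}))$ and the right-hand side to $-ny + \sum_{i=1}^n x_i$, which is exactly \eqref{eq:dc3_firm}.

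Finally, applying \Cref{t:demi_coco} yields $F_1(x_1) = F_2(x_2) = \cdots = F_n(x_n) = y$, which is the desired conclusion. There is no real obstacle here — the work has already been done in \Cref{t:demi_coco}; the corollary is simply the reduction to the firmly nonexpansive case of Remark~\ref{rem:constants}\Cref{rem:constants_coco}, and the proof is a one-line verification that the chosen parameters reproduce the hypotheses.
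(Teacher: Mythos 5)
Your proposal is correct and matches the paper's own proof exactly: both identify firm nonexpansiveness with $1$-cocoercivity and specialize \Cref{t:demi_coco} by taking $\tau_1=\cdots=\tau_n=1$ and $\rho_1=\cdots=\rho_n=1$. Your explicit check that \eqref{eq:dc3_firm} coincides with \eqref{e:t200313a-c} under this choice is a small but welcome addition of detail.
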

\begin{proof}
The proof follows by observing that firmly nonexpansive operators are cocoercive with constant $\tau=1$ and by setting $\tau_1=\cdots=\tau_n=1$ and $\rho_1=\cdots=\rho_n=1$ in \cref{t:demi_coco}.
\end{proof}

\begin{remark} By letting $n=1$ in \cref{t:demi_fne}, we obtain a special case of \cref{fact:Demiclosed} where $C=\Hi$ and $D=\{x-y\}$, which, in turn, is equivalent to Browder's original demiclosedness principle \cite{Browder} (alternatively, see \cite[Theorem~4.27]{BC17}).
\end{remark}

\begin{remark}[\cref{t:demi_coco} versus \cref{t:demi_fne}]
\label{rem:demi_coco}
A demiclosedness principle for cocoercive mappings can be derived directly from \Cref{t:demi_fne} when we apply the latter to the firmly nonexpansive mappings $\tau_1F_1,\tau_2F_2,\dots,\tau_nF_n$. However, this does not yield \cref{t:demi_coco}: In this case the cocoercivity constants will appear in \eqref{eq:dc2_firm} and \eqref{eq:dc4_firm}, however, they are neither a part~of~\eqref{e:t200313a-b}~nor~\eqref{e:t200313a-d}.
\end{remark}

Following \Cref{rem:demi_coco}, in \Cref{t:demi_coco}, the cocoercivity constants $\tau_i$ are not a part of the conditions in \eqref{e:t200313a} except for the condition \eqref{e:t200313a-c}. In the following result, we do not incorporate cocoercivity constants in any of the convergence conditions, however, in exchange, we do impose a certain balance condition on these constants.

\begin{theorem}[demiclosedness principle for balanced cocoercive operators]\label{th:Demiclosed_coco}
For each ${i\in\{1,2,\ldots,n\}}$ let $F_i:\Hi\to\Hi$ be a $\tau_i$-cocoercive mapping where $\tau_i>0$ and let $\seq{x_{i,k}}{k}$ be a sequence in $\Hi$. Suppose that there exists $(\rho_1,\rho_2,\ldots,\rho_n)\in\mathbb{R}_{++}^n$ such that the weighted average
\begin{equation}\label{eq:taus_relation}
\frac{\sum_{i=1}^n\rho_i\tau_i}{\sum_{i=1}^n\rho_i}\geq 1,
\end{equation}
and suppose further that
\begingroup
\allowdisplaybreaks
\begin{subequations}\label{eq:dc_conditions_coco}
 	\begin{align}
 	\forall i\in \{1,2,\ldots,n\}, \qquad x_{i,k}\wto x_i,\label{eq:dc1_coco} \\
 	\forall i\in \{1,2,\ldots,n\}, \qquad F_i(x_{i,k})\wto y, \label{eq:dc2_coco} \\
 	\sum_{i=1}^n\rho_i(x_{i,k}-F_i(x_{i,k}))\to -\bigg(\sum_{i=1}^n\rho_i\bigg)y+\sum_{i=1}^n\rho_ix_i,& \label{eq:dc3_coco} \\
 	\forall i, j\in \{1,2,\ldots,n\}, \qquad F_i(x_{i,k})-F_j(x_{j,k})\to 0.& \label{eq:dc4_coco}
 	\end{align}
\end{subequations}
\endgroup
Then $F_1(x_1)=F_2(x_2)=\cdots=F_n(x_n)=y$.
\end{theorem}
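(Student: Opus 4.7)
The plan is to reduce directly to \Cref{t:demi_coco} by rescaling the cocoercivity constants, exploiting the freedom granted by \Cref{l:constants}\Cref{l:constants_coco}. The only obstacle to applying \Cref{t:demi_coco} directly is that its condition~\eqref{e:t200313a-c} involves the factors $\tau_i$ inside the sum, whereas our hypothesis~\eqref{eq:dc3_coco} does not. To bridge this, I would replace each $\tau_i$ by a smaller $\tilde\tau_i$ chosen so that the identity $\sum_{i=1}^n \rho_i\tilde\tau_i=\sum_{i=1}^n\rho_i$ holds, which collapses the difference between the two conditions.

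Concretely, set $\lambda:=\frac{\sum_{i=1}^n\rho_i}{\sum_{i=1}^n\rho_i\tau_i}$. The balance condition~\eqref{eq:taus_relation} gives $\lambda\in{]0,1]}$. Defining $\tilde\tau_i:=\lambda\tau_i\in{]0,\tau_i]}$, \Cref{l:constants}\Cref{l:constants_coco} ensures that each $F_i$ is $\tilde\tau_i$-cocoercive, and by construction $\sum_{i=1}^n\rho_i\tilde\tau_i=\lambda\sum_{i=1}^n\rho_i\tau_i=\sum_{i=1}^n\rho_i$; equivalently, $\sum_{i=1}^n\rho_i(1-\tilde\tau_i)=0$.

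It remains to verify the hypotheses of \Cref{t:demi_coco} applied with weights $\rho_i$ and cocoercivity constants $\tilde\tau_i$. Conditions~\eqref{e:t200313a-a}, \eqref{e:t200313a-b}, and~\eqref{e:t200313a-d} are verbatim our~\eqref{eq:dc1_coco}, \eqref{eq:dc2_coco}, \eqref{eq:dc4_coco}. For~\eqref{e:t200313a-c}, using $\sum_{i=1}^n\rho_i\tilde\tau_i=\sum_{i=1}^n\rho_i$ on the right-hand side, it suffices to establish
\[
\sum_{i=1}^n \rho_i\bigl(x_{i,k}-\tilde\tau_i F_i(x_{i,k})\bigr)\to -\Bigl(\sum_{i=1}^n\rho_i\Bigr)y+\sum_{i=1}^n\rho_i x_i.
\]
Subtracting the given~\eqref{eq:dc3_coco}, this reduces to showing that $\sum_{i=1}^n \rho_i(1-\tilde\tau_i)F_i(x_{i,k})\to 0$ strongly.

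The decisive calculation, and the one place where all three hypotheses interact, is the decomposition
\[
\sum_{i=1}^n\rho_i(1-\tilde\tau_i)F_i(x_{i,k})=\Bigl(\sum_{i=1}^n\rho_i(1-\tilde\tau_i)\Bigr)F_1(x_{1,k})+\sum_{i=1}^n\rho_i(1-\tilde\tau_i)\bigl(F_i(x_{i,k})-F_1(x_{1,k})\bigr).
\]
The first term vanishes identically by our choice of $\lambda$ (this is precisely where the balance condition is essential), and the second converges strongly to zero by~\eqref{eq:dc4_coco}. \Cref{t:demi_coco} then applies and yields $F_1(x_1)=\cdots=F_n(x_n)=y$. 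I do not anticipate any substantive difficulty beyond this; the only delicate point is recognizing that the balance condition is tight---it simultaneously permits $\lambda\leq 1$ (so that \Cref{l:constants}\Cref{l:constants_coco} is usable) and forces $\sum_{i=1}^n\rho_i(1-\tilde\tau_i)=0$ (so that the residual term vanishes).
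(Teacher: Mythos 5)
Your proposal is correct and follows essentially the same route as the paper: rescale the cocoercivity constants (via \cref{l:constants}\cref{l:constants_coco}) so that the balance condition \eqref{eq:taus_relation} holds with equality, then verify \eqref{e:t200313a-c} by writing $\sum_i\rho_i(1-\tilde\tau_i)F_i(x_{i,k})$ as a combination of the differences $F_i(x_{i,k})-F_1(x_{1,k})$, which vanish by \eqref{eq:dc4_coco}. The only cosmetic difference is that you make the rescaling explicit ($\tilde\tau_i=\lambda\tau_i$) where the paper merely asserts that suitable $\tau_i'\in\left]0,\tau_i\right]$ exist.
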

\begin{proof} In view of \eqref{eq:taus_relation}, we may choose $(\tau'_1,\tau_2',\ldots,\tau_n'),\ \tau_i'\in\left]0,\tau_i\right]$ such that
\begin{equation*}\label{eq:taus_prime_relation}
\frac{\sum_{i=1}^n\rho_i\tau'_i}{\sum_{i=1}^n\rho_i}= 1.
\end{equation*}
By invoking~\Cref{l:constants}\Cref{l:constants_coco}, we see that for each $i\in\{1,2,\ldots,n\}$, $F_i$ is $\tau'_i$-cocoercive. Consequently, we may assume without loss of generality that there is equality in \eqref{eq:taus_relation}, which we rewrite in the form
\begin{equation}\label{eq:taus_relation2}
\rho_1(\tau_1-1)+\rho_2(\tau_2-1)+\cdots+\rho_n(\tau_n-1)= 0.
\end{equation}
Conditions \eqref{eq:dc1_coco}, \eqref{eq:dc2_coco} and \eqref{eq:dc4_coco} are the same as \eqref{e:t200313a-a}, \eqref{e:t200313a-b} and \eqref{e:t200313a-d}, respectively. Thus, in order to employ \Cref{t:demi_coco} and complete the proof, it suffices to prove that \eqref{e:t200313a-c} holds. Indeed, by combining \eqref{eq:dc3_coco}, \eqref{eq:dc4_coco} and \eqref{eq:taus_relation2}, we arrive at
\begin{equation*}\label{eq:dc3_coco_bis_left}
\begin{aligned}
\sum_{i=1}^n \rho_i\big(x_{i,k}-\tau_iF_i(x_{i,k})\big)& = \sum_{i=1}^n \rho_i\big(x_{i,k}-F_i(x_{i,k})\big)+\sum_{i=1}^n \rho_i(1-\tau_i)F_i(x_{i,k})\\
& = \sum_{i=1}^n \rho_i\big(x_{i,k}-F_i(x_{i,k})\big)+\sum_{i=2}^n \rho_i(1-\tau_i)\big(F_i(x_{i,k})-F_1(x_{1,k})\big)\\
& \to -\bigg(\sum_{i=1}^n\rho_i\bigg)y+\sum_{i=1}^n\rho_ix_i=-\bigg(\sum_{i=1}^n\rho_i\tau_i\bigg)y+\sum_{i=1}^n\rho_ix_i,
\end{aligned}
\end{equation*}
which is \eqref{e:t200313a-c}.
\end{proof}

\begin{remark}[on the balance condition \eqref{eq:taus_relation}]
We note that the conditions in \eqref{eq:dc_conditions_coco} for cocoercive mappings are a weighted version of the conditions in \eqref{eq:dc_conditions_firm} for firmly nonexpansive mappings. However, the cocoercivity constants are required to be balanced as in \eqref{eq:taus_relation}; that is, the weighted average of the cocoercivity constants has to be at least $1$, which is always true for firmly nonexpansive mappings (see~\Cref{rem:constants}\Cref{rem:constants_coco}).
\end{remark}

\subsection{Demiclosedness principles for conically averaged operators}

In this section, we provide a demiclosedness principle for finitely many conically averaged operators. This is yet another generalization of the demiclosedness principle for firmly nonexpansive operators (\cref{t:demi_fne}), which we employ in our proof. 

\begin{theorem}[demiclosedness principle for conically averaged operators]\label{t:demi_avg}
\hfill For each\\ ${i\in\{1,2,\ldots,n\}}$,
let $T_i:\Hi\to\Hi$ be conically $\theta_i$-averaged where $\theta_i>0$, and let $\seq{x_{i,k}}{k}$ be a sequence in $\Hi$. Suppose that
\begin{subequations}\label{e:t200309a}
\begin{align}
\forall i\in\{1,\ldots,n\},\quad x_{i,k}\wto x_i,\label{e:t200309a-a}\\
\forall i\in\{1,\ldots,n\},\quad T_i(x_{i,k})\wto 2\theta_i y + (1-2\theta_i)x_i,\label{e:t200309a-b}\\
\sum_{i=1}^{n}\frac{x_{i,k}-T_i(x_{i,k})}{2\theta_i}
\to -ny+\sum_{i=1}^{n}x_i,\label{e:t200309a-c}\\
\forall i, j\in \{1,2,\ldots,n\},\ (x_{i,k}-x_{j,k})-
\left(\frac{x_{i,k}-T_i(x_{i,k})}{2\theta_i}-\frac{x_{j,k}-T_j(x_{j,k})}{2\theta_j}\right)\to 0.\label{e:t200309a-d}
\end{align}
\end{subequations}
Then $T_i(x_i)=2\theta_i y+(1-2\theta_i)x_i$ for all $i\in\{1,\ldots,n\}$.
\end{theorem}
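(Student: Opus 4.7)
The plan is to reduce the conically averaged setting to the firmly nonexpansive setting already handled by \Cref{t:demi_fne}. For each $i\in\{1,\ldots,n\}$, since $T_i$ is conically $\theta_i$-averaged, applying the equivalence between \ref{fact:conical_equivI} and \ref{fact:conical_equivII} of \Cref{fact:conical_equiv} with $\sigma=\frac{1}{2\theta_i}$ shows that the rescaled operator
\[
F_i := \Big(1-\tfrac{1}{2\theta_i}\Big)\Id + \tfrac{1}{2\theta_i}\,T_i
\]
is conically $\frac{1}{2}$-averaged, and hence firmly nonexpansive by \Cref{rem:constants}\Cref{rem:constants_coni}. My goal is then to verify the four hypotheses of \Cref{t:demi_fne} for $F_1,\ldots,F_n$, the sequences $(x_{i,k})$, and the common limits $x_1,\ldots,x_n$ and $y$.

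Each verification will be a short computation that uses the defining coefficients of $F_i$. Condition \eqref{eq:dc1_firm} is literally \eqref{e:t200309a-a}. For \eqref{eq:dc2_firm}, I would combine \eqref{e:t200309a-a} and \eqref{e:t200309a-b} to compute
\[
F_i(x_{i,k}) \wto \Big(1-\tfrac{1}{2\theta_i}\Big)x_i + \tfrac{1}{2\theta_i}\big(2\theta_i y+(1-2\theta_i)x_i\big) = y,
\]
where the $x_i$-contributions cancel exactly. The elementary identity
\[
x_{i,k}-F_i(x_{i,k}) = \frac{x_{i,k}-T_i(x_{i,k})}{2\theta_i}
\]
turns \eqref{e:t200309a-c} into \eqref{eq:dc3_firm}, and the parallel rewrite
\[
F_i(x_{i,k})-F_j(x_{j,k}) = (x_{i,k}-x_{j,k}) - \left(\frac{x_{i,k}-T_i(x_{i,k})}{2\theta_i} - \frac{x_{j,k}-T_j(x_{j,k})}{2\theta_j}\right)
\]
turns \eqref{e:t200309a-d} into \eqref{eq:dc4_firm}.

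Once the four hypotheses are in place, \Cref{t:demi_fne} yields $F_1(x_1)=\cdots=F_n(x_n)=y$, and solving the resulting equation $(1-\tfrac{1}{2\theta_i})x_i+\tfrac{1}{2\theta_i}T_i(x_i)=y$ for $T_i(x_i)$ returns exactly $T_i(x_i)=2\theta_i y+(1-2\theta_i)x_i$. The only genuinely creative step is identifying the correct surrogate $F_i$; the seemingly ad hoc factors $2\theta_i$ and $1-2\theta_i$ appearing throughout \eqref{e:t200309a-b}--\eqref{e:t200309a-d} are tailored precisely so that the reduction produces no residual terms, so after the surrogate is in hand the remainder of the argument is routine bookkeeping and no real obstacle is expected.
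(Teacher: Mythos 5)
Your proposal is correct and follows essentially the same route as the paper: the paper's proof also introduces the surrogate $F_i=\bigl(1-\tfrac{1}{2\theta_i}\bigr)\Id+\tfrac{1}{2\theta_i}T_i=\Id-\tfrac{\Id-T_i}{2\theta_i}$, invokes \Cref{fact:conical_equiv}\Cref{fact:conical_equivII} together with \Cref{rem:constants}\Cref{rem:constants_coni} to get firm nonexpansiveness, and then verifies the four hypotheses of \Cref{t:demi_fne} by exactly the computations you describe. No gaps.
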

\begin{proof}
For each $i\in\{1,\ldots,n\}$, set
\begin{equation*}
F_i:=\left(1-\frac{1}{2\theta_i}\right)\Id+\frac{1}{2\theta_i} T_i
=\Id -\left(\frac{\Id-T_i}{2\theta_i}\right).
\end{equation*}
Then \Cref{fact:conical_equiv}\Cref{fact:conical_equivII} and \Cref{rem:constants}\Cref{rem:constants_coni} imply that $F_i$ is firmly nonexpansive. By employing \eqref{e:t200309a-a} and \eqref{e:t200309a-b}, we see that
\begin{equation}\label{v_coni_v1}
F_i(x_{i,k})=\Big(1-\frac{1}{2\theta_i}\Big)x_{i,k}+\frac{1}{2\theta_i} T_i(x_{i,k}) \wto \Big(1-\frac{1}{2\theta_i}\Big)x_i
+ y+ \frac{1-2\theta_i}{2\theta_i}x_i = y.
\end{equation}
Next, by invoking \eqref{e:t200309a-c}, we obtain
\begin{equation}\label{v_coni_v2}
\sum_{i=1}^{n}(x_{i,k}-F_i(x_{i,k}))= \sum_{i=1}^n\frac{x_{i,k}-T_i(x_{i,k})}{2\theta_i}\to -ny+\sum_{i=1}^{n}x_i.
\end{equation}
Finally, by employing \eqref{e:t200309a-d}, we see that for all $i,j\in\{1,\ldots,n\}$,
\begin{equation}\label{v_coni_v3}
\begin{aligned}
F_i(x_{i,k})-F_j(x_{j,k})
&=\Big(1-\frac{1}{2\theta_i}\Big)x_{i,k}-\Big(1-\frac{1}{2\theta_j}\Big)x_{j,k}
+\frac{T_i(x_{i,k})}{2\theta_i}-\frac{T_j(x_{j,k})}{2\theta_j}\\
&=(x_{i,k}-x_{j,k})+\frac{T_i(x_{i,k})-x_{i,k}}{2\theta_i} -\frac{T_j(x_{j,k})-x_{j,k}}{2\theta_j}\ \to 0.
\end{aligned}
\end{equation}
Consequently, in view of \eqref{e:t200309a-a}, \eqref{v_coni_v1}, \eqref{v_coni_v2} and \eqref{v_coni_v3}, we apply \cref{t:demi_fne} to obtain
\begin{equation*}
y=F_i(x_i)=\Big(1-\frac{1}{2\theta_i}\Big)x_i +\frac{1}{2\theta_i}T_i(x_i),\quad \forall i\in\{1,\ldots,n\},
\end{equation*}
which concludes the proof.
\end{proof}

\begin{remark}[\cref{t:demi_avg} versus \cref{t:demi_fne}]
Since firmly nonexpansive operators are conically $\theta$-averaged with $\theta=\frac{1}{2}$, it is clear that the assertion of \cref{t:demi_avg} is more general than the one of \cref{t:demi_fne}. However, in view of the proof of \cref{t:demi_avg}, we conclude that the two assertions are equivalent.
\end{remark}

We proceed in a similar manner to our discussion of demiclosedness principles for cocoercive operators, namely, we would like to have convergence conditions as in \eqref{e:t200309a} of \Cref{t:demi_avg} that do not incorporate the conical average constants $\theta_i$. Indeed, in the following result, we provide such conditions while, yet again, imposing a balance condition on the $\theta_i$'s. We focus our attention on a result concerning two mappings, which we will use in applications.

\begin{theorem}[demiclosedness principle for two balanced averaged operators]\label{th:Demiclosed_avg}
Let ${T_1,T_2:\Hi\to\Hi}$ be $\theta_1$- and $\theta_2$-averaged mappings where $\theta_1,\theta_2\in\left]0,1\right[$, respectively, and suppose that there exist scalars $\rho_1,\rho_2>0$ such that
\begin{equation}\label{eq:thetas_relation}
\theta_1\leq\frac{\rho_2}{\rho_1+\rho_2} \quad\text{and}\quad \theta_2\leq\frac{\rho_1}{\rho_1+\rho_2}.
\end{equation}
Let $(x_{1,k})_{k=0}^{\infty}$ and $(x_{2,k})_{k=0}^{\infty}$ be sequences in $\Hi$ such that 
\begingroup
\allowdisplaybreaks
\begin{subequations}\label{eq:dc_conditions_conical}
 	\begin{align}
 	x_{1,k}\wto x_1 \quad\text{ and }\quad x_{2,k}\wto x_2,& \label{eq:dc1_conical} \\
 	T_1(x_{1,k})\wto y \quad\text{ and }\quad T_2(x_{2,k})\wto y,& \label{eq:dc2_conical} \\
 	\rho_1(x_{1,k}-T_1(x_{1,k}))+\rho_2(x_{2,k}-T_2(x_{2,k}))\to 0,& \label{eq:dc3_conical} \\
 	T_1(x_{1,k})-T_2(x_{2,k})\to 0.& \label{eq:dc4_conical}
 	\end{align}
\end{subequations}
\endgroup
Then $T_1(x_1)=T_2(x_2)=y$.
\end{theorem}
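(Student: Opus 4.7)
The plan is to reduce \Cref{th:Demiclosed_avg} to the general demiclosedness principle \Cref{t:demi_avg}, invoking the balance condition \eqref{eq:thetas_relation} in much the same way that \Cref{th:Demiclosed_coco} is reduced to \Cref{t:demi_coco}. The obstruction is that \Cref{t:demi_avg} demands $T_i(x_{i,k})\wto 2\theta_iy^{*}+(1-2\theta_i)x_i$, whereas our hypothesis \eqref{eq:dc2_conical} supplies only a common weak limit $y$; the balance condition is precisely what is needed to find a single auxiliary $y^{*}$ that simultaneously reconciles the two forms for both $i=1,2$.

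First, by \Cref{l:constants}\Cref{l:constants_conical} I may enlarge each $\theta_i$ without disturbing conical averagedness, so I assume equality throughout \eqref{eq:thetas_relation}, giving $\theta_1=\rho_2/(\rho_1+\rho_2)$ and $\theta_2=\rho_1/(\rho_1+\rho_2)$, hence $\theta_1+\theta_2=1$. Taking the weak limit of \eqref{eq:dc3_conical} with \eqref{eq:dc1_conical}--\eqref{eq:dc2_conical} yields $\rho_1(x_1-y)+\rho_2(x_2-y)=0$, which with the explicit $\theta_i$ rewrites as $y=(1-\theta_i)x_i+\theta_ix_{3-i}$ for $i=1,2$. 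In particular, the symmetric choice
\[
y^{*}:=\tfrac{1}{2}(x_1+x_2)
\]
satisfies $y=2\theta_iy^{*}+(1-2\theta_i)x_i$ for both $i$.

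Next I verify the four hypotheses \eqref{e:t200309a-a}--\eqref{e:t200309a-d} of \Cref{t:demi_avg} with this $y^{*}$. Conditions \eqref{e:t200309a-a} and \eqref{e:t200309a-b} follow immediately from \eqref{eq:dc1_conical}--\eqref{eq:dc2_conical} and the identity just established. For \eqref{e:t200309a-c}, noting that $1/(2\theta_i)=(\rho_1+\rho_2)/(2\rho_{3-i})$, the sum $\sum_{i=1}^{2}(x_{i,k}-T_i(x_{i,k}))/(2\theta_i)$ equals the positive scalar multiple $\tfrac{\rho_1+\rho_2}{2\rho_1\rho_2}$ of $\rho_1(x_{1,k}-T_1(x_{1,k}))+\rho_2(x_{2,k}-T_2(x_{2,k}))$, which tends to $0$ by \eqref{eq:dc3_conical}; and $-2y^{*}+x_1+x_2=0$ matches. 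For \eqref{e:t200309a-d}, I rewrite
\[
x_{1,k}-x_{2,k}=(x_{1,k}-T_1(x_{1,k}))-(x_{2,k}-T_2(x_{2,k}))+(T_1(x_{1,k})-T_2(x_{2,k})),
\]
substitute into \eqref{e:t200309a-d}, and collect coefficients of $x_{i,k}-T_i(x_{i,k})$; the balance identities force these to combine into the same scalar multiple of the vector in \eqref{eq:dc3_conical}, while the extra $T_1(x_{1,k})-T_2(x_{2,k})$ term vanishes by \eqref{eq:dc4_conical}.

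Invoking \Cref{t:demi_avg} then gives $T_i(x_i)=2\theta_iy^{*}+(1-2\theta_i)x_i=y$ for $i=1,2$. The main conceptual obstacle is identifying the correct auxiliary point $y^{*}$: the symmetric choice $y^{*}=(x_1+x_2)/2$ is essentially forced by the equality form of the balance condition, after which the remaining verifications are straightforward algebraic bookkeeping parallel to the proof of \Cref{th:Demiclosed_coco}.
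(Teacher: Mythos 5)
Your proposal is correct and follows essentially the same route as the paper's proof: reduce to equality in \eqref{eq:thetas_relation} via \Cref{l:constants}\Cref{l:constants_conical}, identify the auxiliary point $\overline{y}=\tfrac12(x_1+x_2)$ so that $y=2\theta_i\overline{y}+(1-2\theta_i)x_i$, and verify the hypotheses of \Cref{t:demi_avg} by the same algebraic manipulations (including the identity $1-2\theta_2=2\theta_1-1$ used to reduce condition \eqref{e:t200309a-d} to \eqref{eq:dc3_conical} and \eqref{eq:dc4_conical}). No substantive differences from the paper's argument.
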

\begin{proof}
As in the proof of \Cref{th:Demiclosed_coco}, by employing ~\Cref{l:constants}\Cref{l:constants_conical}, we may assume without the loss of generality that there is equality in \eqref{eq:thetas_relation}, which we rewrite in the form
\begin{subequations}\label{eq:thetas_relation2}
\begin{align}
\left( \tfrac{1}{2}-\theta_1 \right) + \left( \tfrac{1}{2}-\theta_2 \right) = 0,\label{eq:thetas_relation2I}\\
\text{and}\quad\rho_1\theta_1=\rho_2\theta_2.\label{eq:thetas_relation2II}
\end{align}
\end{subequations}
By combining  \eqref{eq:dc1_conical},  \eqref{eq:dc2_conical}  and \eqref{eq:dc3_conical} we see that $\rho_1(x_1-y)+\rho_2(x_2-y)=0$, equivalently,
\begin{equation}\label{eq:y0}
y=\frac{\rho_1x_1+\rho_2x_2}{\rho_1+\rho_2}=\theta_2x_1+\theta_1x_2.
\end{equation}
We set $\overline{y}:=\frac{1}{2}(x_1+x_2)$. By invoking \eqref{eq:thetas_relation2I} it follows that
\begin{equation}\label{eq:y00}
\begin{aligned}
2\theta_1\overline{y}+(1-2\theta_1)x_1=(1-\theta_1)x_1+\theta_1x_2=\theta_2x_1+\theta_1x_2,\\
2\theta_2\overline{y}+(1-2\theta_2)x_2=\theta_2x_1+(1-\theta_2)x_2=\theta_2x_1+\theta_1x_2.
\end{aligned}
\end{equation}
Consequently, \eqref{eq:dc2_conical} and \eqref{eq:y0} imply that
\begin{equation}\label{eq:th_conical_newFwto}
\begin{aligned}
T_1(x_{1,k}) & \wto 2\theta_1\overline{y}+(1-2\theta_1)x_1,\\
T_2(x_{2,k}) & \wto 2\theta_2\overline{y}+(1-2\theta_2)x_2.
\end{aligned}
\end{equation}
Now, by \eqref{eq:thetas_relation2II}, \eqref{eq:dc3_conical} and the definition of $\overline{y}$, it follows that
\begin{equation}\label{eq:th_conical_sumtosum}
\begin{aligned}
&\frac{x_{1,k}-T_1(x_{1,k})}{2\theta_1}+\frac{x_{2,k}-T_2(x_{2,k})}{2\theta_2}\\&\qquad=\frac{1}{2\rho_2\theta_2}\big(\rho_1\left(x_{1,k}-T_1(x_{1,k})\right)+\rho_2\left(x_{2,k}-T_2(x_{2,k})\right)\big)\to 0=-2\overline{y}+x_1+x_2.\\
\end{aligned}
\end{equation}
In addition, from \eqref{eq:thetas_relation2I} it follows that
\begin{equation}\label{e:200721a}
\begin{aligned}
(x_{1,k}&-x_{2,k})-\left(\frac{x_{1,k}-T_1(x_{1,k})}{2\theta_1}-\frac{x_{2,k}-T_2(x_{2,k})}{2\theta_2}\right)=\\
&=\frac{T_1(x_{1,k})-(1-2\theta_1)x_{1,k}}{2\theta_1}+\frac{(1-2\theta_2)x_{2,k}-T_2(x_{2,k})}{2\theta_2}\\
&=\frac{(1-2\theta_1)}{2\theta_1}\big(T_1(x_{1,k})-x_{1,k}\big)+T_1(x_{1,k})-T_2(x_{2,k})+\frac{(1-2\theta_2)}{2\theta_2}\big(x_{2,k}-T_2(x_{2,k})\big)\\
&=(1-2\theta_2)\left( \frac{x_{1,k}-T_1(x_{1,k})}{2\theta_1}+\frac{x_{2,k}-T_2(x_{2,k})}{2\theta_2}\right)+\big(T_1(x_{1,k})-T_2(x_{2,k})\big).
\end{aligned}
\end{equation}
By combining \eqref{e:200721a} with \eqref{eq:th_conical_sumtosum} and \eqref{eq:dc4_conical}, we obtain
\begin{equation}\label{eq:th_conical_difto0}
(x_{1,k}-x_{2,k})-\left(\frac{x_{1,k}-T_1(x_{1,k})}{2\theta_1}-\frac{x_{2,k}-T_2(x_{2,k})}{2\theta_2}\right)\to 0.
\end{equation}
Finally, in view of \eqref{eq:dc1_conical}, \eqref{eq:th_conical_newFwto},  \eqref{eq:th_conical_sumtosum} and \eqref{eq:th_conical_difto0}, we employ \Cref{t:demi_avg} in order to obtain
\begin{equation*}
T_1(x_1)=2\theta_1\overline{y}+(1-2\theta_1)x_1\quad\text{and}\quad
T_2(x_2)=2\theta_2\overline{y}+(1-2\theta_2)x_2.
\end{equation*}
By recalling \eqref{eq:y0} and \eqref{eq:y00}, we arrive at $T_1(x_1)=T_2(x_2)=y$.
\end{proof}

\begin{remark}[on the balance condition]
In \Cref{th:Demiclosed_avg}, we did not provide an explicit balance condition for the averagedness constants as we did in~\eqref{eq:taus_relation}. However, we did impose a stronger condition~\eqref{eq:thetas_relation}, which indeed implies
\begin{equation*}
\frac{\rho_1\theta_1+\rho_2\theta_2}{\rho_1+\rho_2}\leq \frac{2\rho_1\rho_2}{(\rho_1+\rho_2)^2}\leq \frac{1}{2};
\end{equation*}
that is, the weighted average of the $\theta_i$'s is at most $\frac{1}{2}$. This is always true for firmly nonexpansive mappings (see~\Cref{rem:constants}\Cref{rem:constants_coni}).
\end{remark}

\section{Applications to the adaptive Douglas--Rachford algorithm}\label{sec:aDR}

We recall that the problem of finding a zero of the sum of two operators $A,B:\Hi\tto\Hi$ is
\begin{equation}\label{e:0inA+B}
\text{find } x\in\Hi
\text{ such that } 0 \in Ax+Bx.
\end{equation}

In this section, we apply our generalized demiclosedness principles and derive the {\em weak convergence} of the shadow sequence of the \emph{adaptive Douglas--Rachford (aDR)} algorithm, originally introduced in \cite{DPadapt}, in order to solve \eqref{e:0inA+B} for a weakly and a strongly monotone operators. The analysis is then extended in \cite{BDP19}, which includes weakly comonotone and strongly comonotone operators as well.

Given $(\gamma,\delta,\lambda,\mu,\kappa)\in\RR_{++}^5$, the {\em aDR operator} is defined by
\begin{equation}\label{eq:aDR_opeartor}
\Tdr:=(1-\kappa)\Id+\kappa R_2R_1,
\end{equation}
where
\begin{equation*}
\begin{aligned}
J_1&:=J_{\gamma A}=(\Id+\gamma A)^{-1},
& R_1&:=J^{\lambda}_{\gamma A}=(1-\lambda)\Id+\lambda J_1,\\
J_2&:=J_{\delta B}=(\Id+\delta B)^{-1},
& R_2&:=J^{\mu}_{\delta A}=(1-\mu)\Id+\mu J_2.
\end{aligned}
\end{equation*}
Set an initial point $x_0\in\Hi$. The aDR algorithm generates a sequence $\seq{x_k}{k}$ by the recurrence
\begin{equation}\label{e:TaDR}
x_{k+1}\in\Tdr(x_k),\quad  k=0,1,2,\ldots.
\end{equation}
We observe that~\eqref{eq:aDR_opeartor} coincides with the classical Douglas--Rachford operator in the case where $\lambda=\mu:=2$, $\gamma=\delta $, and $\kappa=1/2$. Similarly to the classical DR algorithm, the fixed points of $\Tdr$ are not explicit solutions of \eqref{e:0inA+B}. Nonetheless, under the assumptions (see~\cite[Section~5]{BDP19}) 
\begin{subequations}\label{e:lm_mu}
\begin{align}
(\lambda-1)(\mu-1)=1 \quad\text{and}\quad \delta=(\lambda-1)\gamma,
\end{align}
equivalently,
\begin{align}
\lambda=1+\frac{\delta}{\gamma}
\quad\text{and}\quad
\mu=1+\frac{\gamma}{\delta},
\end{align}
\end{subequations}
the fixed points are useful in order to obtain a solution as we show next.

\begin{fact}
[aDR and solutions to the inclusion problem]
\label{f:aDR_fix}
Suppose that $(\gamma,\delta)\in\RR_{++}^2$, that $\lambda,\mu$ are defined by \eqref{e:lm_mu}, that $\kappa>0$, and that $J_1$ is single-valued. Then
\begin{enumerate}[label={\rm(\roman*)}]
\item $\Id-\Tdr=\kappa\mu(J_1-J_2R_1)$;\label{f:aDR_fix_i}
\item $J_1(\Fix\Tdr)=\zer(A+B)$.\label{f:aDR_fix_ii}
\end{enumerate}
\end{fact}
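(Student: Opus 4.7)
Part (i) is a purely algebraic identity between mappings, and part (ii) will follow by combining (i) with the defining inclusions of the resolvents.

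For (i), starting from $\Id-\Tdr = \kappa(\Id - R_2 R_1)$, the plan is to substitute $R_2 = (1-\mu)\Id + \mu J_2$ and then $R_1 = (1-\lambda)\Id + \lambda J_1$, and to collect the $\Id$, $J_1$ and $J_2 R_1$ contributions separately. The coefficient of $\Id$ becomes $1-(1-\mu)(1-\lambda)$, which vanishes by the balance condition $(\lambda-1)(\mu-1)=1$ from \eqref{e:lm_mu}. The coefficient of $J_1$ simplifies to $(\mu-1)\lambda$; using $\mu-1=\gamma/\delta$ and $\lambda=1+\delta/\gamma$ this equals $\mu$. The resulting identity is exactly $\Id-\Tdr = \kappa\mu(J_1 - J_2 R_1)$.

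For (ii), part (i) together with $\kappa\mu>0$ and the single-valuedness of $J_1$ shows that $x\in\Fix\Tdr$ if and only if the point $p:=J_1 x = J_2 R_1 x$ is a well-defined common value. From the defining inclusions of the resolvents we then have $\tfrac{1}{\gamma}(x-p)\in Ap$ and $\tfrac{1}{\delta}(R_1 x - p)\in Bp$. A short computation using $R_1 x - p = (1-\lambda)(x-p)$ and $\delta=(\lambda-1)\gamma$ reduces the second inclusion to $\tfrac{1}{\gamma}(p-x)\in Bp$; adding the two yields $0\in Ap+Bp$, proving $J_1(\Fix\Tdr)\subseteq\zer(A+B)$. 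For the reverse inclusion, given $p\in\zer(A+B)$ pick $v\in Ap$ with $-v\in Bp$ and set $x:=p+\gamma v$, so that $J_1 x = p$. Running the same calculation backwards gives $R_1 x - p = -\delta v$ and hence $J_2 R_1 x = p$; by (i) this yields $x\in\Fix\Tdr$ and $p\in J_1(\Fix\Tdr)$.

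The proof is essentially bookkeeping; there is no serious obstacle, only the need to keep the set-valued and single-valued parts straight. The hypothesis that $J_1$ is single-valued, which is exactly what the aDR framework from \cite{DPadapt,BDP19} guarantees, ensures that the fixed-point/zero correspondence via $J_1$ is unambiguous, so that the chain of equivalences in (ii) is genuinely an equality of sets rather than merely an inclusion between images.
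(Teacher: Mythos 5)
Your proposal is correct, and it supplies in full the computation that the paper itself delegates to the citation \cite[Lemma~4.1]{DPadapt}: the paper's ``proof'' of \cref{f:aDR_fix} is just a pointer to that reference, so there is no in-paper argument to compare against beyond the external lemma, whose derivation is essentially the one you give. Your part (i) is exactly the intended bookkeeping: expand $\Id-\Tdr=\kappa(\Id-R_2R_1)$, kill the $\Id$-coefficient $1-(1-\mu)(1-\lambda)$ via $(\lambda-1)(\mu-1)=1$, and identify the $J_1$-coefficient $(\mu-1)\lambda=\mu$ via $\mu-1=\gamma/\delta$ and $\lambda=1+\delta/\gamma$. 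Part (ii) is also right; the translation $R_1x-p=(1-\lambda)(x-p)$ together with $\delta=(\lambda-1)\gamma$ converts the $B$-inclusion into $\tfrac{1}{\gamma}(p-x)\in Bp$, and the reverse direction via $x:=p+\gamma v$ is the standard construction. The only point worth tightening is that the hypotheses make $J_1$ single-valued but say nothing about $J_2$, so $J_2R_1x$ is a priori a set and the fixed-point condition should be read as the inclusion $J_1x\in J_2(R_1x)$ rather than the equality $J_1x=J_2R_1x$; your argument goes through verbatim with this reading (the forward direction uses $p\in J_2(R_1x)$, and the backward direction exhibits $p$ as an element of that set), consistent with the paper's convention $\Fix T=\{x:x\in T(x)\}$ for set-valued maps.
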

\begin{proof}
See~\cite[Lemma~4.1]{DPadapt}.
\end{proof}

Suppose that $\seq{x_k}{k}$ is generated by the aDR algorithm and converges weakly to the limit point $x^\star\in\Fix \Tdr$. Then \cref{f:aDR_fix}\ref{f:aDR_fix_ii} asserts that the \emph{shadow} limit point $J_1(x^\star)$ is a solution of \eqref{e:0inA+B}. Our aim is to prove that, under certain assumptions, the shadow sequence $\seq{J_1(x_k)}{k}$ converges weakly to the shadow limit~$J_1(x^\star)$.

In our analysis we will employ the convergence results \cite[Theorems~5.4 and 5.7]{BDP19}: Under the assumptions therein, the aDR operator \eqref{e:TaDR} is shown to be averaged and, hence, singled-valued. Consequently, in these cases, we will employ equality in \eqref{e:TaDR}.

\subsection{Adaptive DR algorithm for monotone operators}

We begin our discussion with the case where the operators are maximally $\alpha$-monotone and maximally $\beta$-monotone. We will prove the weak convergence of the aDR algorithm shadow sequence by means of a generalized demiclosedness principle. To this end, we recall the following fact regarding the convergence of the aDR algorithm.

\begin{fact}[aDR for monotone operators]\label{fact:aDR_mono}
Let $\alpha,\beta\in\RR$ such that $\alpha+\beta\geq 0$ and let $A,B:\Hi\tto\Hi$ be maximally $\alpha$-monotone and maximally $\beta$-monotone, respectively,  with $\zer(A+B)\neq \varnothing$. Let $(\gamma,\delta,\lambda,\mu)\in\RR^4_{++}$ satisfy \eqref{e:lm_mu} and either
\begingroup
\allowdisplaybreaks
\begin{subequations}\label{eq:assumtions_mono}
\begin{align}
\delta(1+2\gamma\alpha)=\gamma, &\quad \text{ if } \alpha+\beta=0,\\
\text{or } \quad (\gamma+\delta)^2<4\gamma\delta(1+\gamma\alpha)(1+\delta\beta), &\quad \text{ if } \alpha+\beta>0.
\end{align}
\end{subequations}
\endgroup
Set $\kappa\in{]0,\overline{\kappa}[}$ where 
\begin{equation}\label{eq:kappa_mono}
\overline{\kappa}:=\left\{\begin{array}{ll}
1, & \text{if } \alpha+\beta=0;\\
\displaystyle\frac{4\gamma\delta(1+\gamma\alpha)(1+\delta\beta)-(\gamma+\delta)^2}{2\gamma\delta(\gamma+\delta)(\alpha+\beta)}, & \text{if } \alpha+\beta>0.
\end{array}\right.
\end{equation}
Set a starting point $x_0\in\Hi$ and $(x_k)_{k=0}^{\infty}$ by $x_{k+1}=\Tdr(x_k),\ k=0,1,2,\ldots.$ Then
\begin{enumerate}[label={\rm(\roman*)}]
\item $x_k-x_{k+1}\to 0;$ \label{fact:aDR_monoI}
\item $x_k\wto x^\star\in \Fix \Tdr \text{ with } J_1(x^\star)\in \zer(A+B).$  \label{fact:aDR_monoII}
\end{enumerate}
\end{fact}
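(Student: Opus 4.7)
The plan is to prove both assertions by establishing that the aDR operator $\Tdr$ is averaged and then invoking the classical Krasnoselski--Mann theory for averaged mappings with nonempty fixed point set. The argument splits naturally into three stages: (a) show that the resolvents $J_1, J_2$ and the relaxed resolvents $R_1, R_2$ are conically averaged with explicit constants; (b) propagate these constants through composition and relaxation to conclude that $\Tdr$ is conically $\kappa\theta$-averaged for an appropriate $\theta$, with $\kappa\theta<1$ whenever $\kappa<\overline{\kappa}$; (c) apply the standard convergence theorem for averaged operators.

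For stage (a), the assumptions \eqref{eq:assumtions_mono} and \eqref{e:lm_mu} imply $1+\gamma\alpha>0$ and $1+\delta\beta>0$, so \Cref{fact:resolvent_mono} yields that $J_1$ is everywhere-defined, single-valued, and $(1+\gamma\alpha)$-cocoercive, and likewise for $J_2$ with constant $1+\delta\beta$. Via \Cref{rem:constants}\Cref{rem:constants_coni} (the translation that a $\tau$-cocoercive mapping is conically $\frac{1}{2\tau}$-averaged, obtained from \Cref{fact:conical_equiv}\Cref{fact:conical_equivIII}), this is equivalent to $J_1$ being conically $\frac{1}{2(1+\gamma\alpha)}$-averaged and $J_2$ being conically $\frac{1}{2(1+\delta\beta)}$-averaged. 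Applying \Cref{fact:conical_equiv}\Cref{fact:conical_equivII} to $R_1=(1-\lambda)\Id+\lambda J_1$ and $R_2=(1-\mu)\Id+\mu J_2$ then shows that $R_1$ is conically $\frac{\lambda}{2(1+\gamma\alpha)}$-averaged and $R_2$ is conically $\frac{\mu}{2(1+\delta\beta)}$-averaged.

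For stage (b), I would employ a composition rule for conically averaged mappings: if $S_1$ is conically $\sigma_1$-averaged and $S_2$ is conically $\sigma_2$-averaged, then $S_2S_1$ is conically $\sigma$-averaged for an explicit $\sigma=\sigma(\sigma_1,\sigma_2)$ (such a rule is developed via the quadratic characterization in \Cref{fact:conical_equiv}\Cref{fact:conical_equivIII} and a standard Cauchy--Schwarz argument). Plugging in the constants from stage (a), we obtain an explicit $\theta$ for $R_2R_1$; then \Cref{fact:conical_equiv}\Cref{fact:conical_equivII} applied to $\Tdr=(1-\kappa)\Id+\kappa R_2R_1$ gives that $\Tdr$ is conically $\kappa\theta$-averaged. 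The threshold $\overline{\kappa}$ in \eqref{eq:kappa_mono} is calibrated precisely to the identity $\overline{\kappa}\,\theta=1$, so that $\kappa<\overline{\kappa}$ corresponds to $\Tdr$ being classically averaged with parameter in ${]0,1[}$.

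For stage (c), the assumption $\zer(A+B)\neq\varnothing$ combined with \Cref{f:aDR_fix}\Cref{f:aDR_fix_ii} gives $\Fix\Tdr\neq\varnothing$. The classical theory of averaged operators then delivers Fej\'er monotonicity of $\seq{x_k}{k}$ with respect to $\Fix\Tdr$, asymptotic regularity $\Tdr(x_k)-x_k\to 0$ (which is \Cref{fact:aDR_monoI}), and weak convergence of $\seq{x_k}{k}$ to some $x^\star\in\Fix\Tdr$; by \Cref{f:aDR_fix}\Cref{f:aDR_fix_ii}, $J_1(x^\star)\in\zer(A+B)$, yielding \Cref{fact:aDR_monoII}. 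The main obstacle is the algebraic verification in stage (b): producing a closed-form conical-averagedness constant for the composition $R_2R_1$ and checking that it combines with the relations \eqref{e:lm_mu} and \eqref{eq:assumtions_mono} to reproduce exactly the denominator and numerator in \eqref{eq:kappa_mono}. The appearance of the product $(1+\gamma\alpha)(1+\delta\beta)$ and of $\alpha+\beta$ in \eqref{eq:kappa_mono} strongly suggests that it is this composition bound, rather than any finer estimate, that governs the admissible range of $\kappa$.
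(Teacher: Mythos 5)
The paper does not actually prove this statement: it is imported as a Fact, whose ``proof'' is the two-line citation combining \cite[Theorem~5.7]{BDP19}, \cite[Corollary~2.10]{BDP19} and \Cref{f:aDR_fix}\Cref{f:aDR_fix_ii}. Your proposal therefore reconstructs the proof of the cited theorem, and your overall architecture (averagedness of $\Tdr$, then Krasnoselski--Mann) together with stage (c) is indeed the route taken there. However, stage (a) contains a genuine error: a $\tau$-cocoercive mapping is \emph{not} conically $\tfrac{1}{2\tau}$-averaged in general, and \Cref{rem:constants}\Cref{rem:constants_coni} does not assert this --- it only records that firm nonexpansiveness coincides with conical $\tfrac{1}{2}$-averagedness. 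Cocoercivity transforms under the scaling $T\mapsto\tau T$, whereas conical averagedness transforms under the relaxation $T\mapsto(1-\sigma)\Id+\sigma T$ (\Cref{fact:conical_equiv}\Cref{fact:conical_equivII}); there is no dictionary of the form you invoke. Concretely, $T=0$ is $\tau$-cocoercive for every $\tau>0$ but is conically $\theta$-averaged only for $\theta\geq\tfrac{1}{2}$, so for $\tau>1$ it is not conically $\tfrac{1}{2\tau}$-averaged; and $T=\tau^{-1}\Id$ is $\tau$-cocoercive but not conically $\tfrac{1}{2\tau}$-averaged when $\tau<1$. Since in the monotone setting the constants $1+\gamma\alpha$ and $1+\delta\beta$ may lie on either side of $1$, your conversion breaks exactly in the cases this Fact is meant to cover. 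This distinction is in fact the organizing principle of the paper: \Cref{fact:resolvent_mono} (cocoercive resolvents, monotone case) versus \Cref{fact:resolvent_comono} (conically averaged resolvents, comonotone case), with the two separate demiclosedness principles \Cref{th:Demiclosed_coco} and \Cref{th:Demiclosed_avg}. The conically-averaged calculus you describe is the right tool for \Cref{fact:aDR_comono}, not for the present statement.

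Stage (b) is where the real content lies, and it is only gestured at. Even with correct inputs, one must estimate $\|R_2R_1(x)-R_2R_1(y)\|^2$ directly from the cocoercivity of $J_1$ and $J_2$, keeping in mind that \eqref{e:lm_mu} forces $\lambda,\mu>1$ and possibly $>2$, so the standard composition rule for averaged maps with parameters in ${]0,1[}$ does not apply off the shelf; one must then check that the resulting conical constant $\theta$ of $R_2R_1$ satisfies $\theta\,\overline{\kappa}=1$ with $\overline{\kappa}$ as in \eqref{eq:kappa_mono}. That computation is the substance of \cite[Theorem~5.7]{BDP19} and is absent from your proposal. If the goal is to match the paper, the honest proof is the citation; if the goal is a self-contained argument, stage (a) must be replaced by a direct cocoercivity-based estimate and stage (b) must be carried out in full.
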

\begin{proof}
Combine~\cite[Theorem 5.7]{BDP19} with \cite[Corollary~2.10]{BDP19} and \Cref{f:aDR_fix}\Cref{f:aDR_fix_ii}.
\end{proof}

\begin{theorem}[weak convergence of the shadow sequence under monotonicity]\label{th:aDR_shadow_mono}
\	\\
Suppose that $A$ and $B$ are maximally $\alpha$-monotone and maximally $\beta$-monotone, respectively, where $\alpha+\beta\geq 0$ and $\zer(A+B)\neq \varnothing$. Let $(\gamma,\delta,\lambda,\mu)\in\RR_{++}^4$ satisfy \eqref{e:lm_mu} and \eqref{eq:assumtions_mono}. Let $\kappa\in {]0,\overline{\kappa}[}$ where $\overline{\kappa}$ is defined by \eqref{eq:kappa_mono}. Set a starting point $x_0\in\Hi$ and $(x_k)_{k=0}^{\infty}$ by
%\begin{equation*}
$x_{k+1}=\Tdr(x_k),\ k=0,1,2,\ldots.$
%\end{equation*}
Then the shadow sequence $\seq{J_1(x_k)}{k}$ converges weakly
$$J_1(x_k)\wto J_1(x^\star)\in \zer(A+B).$$
\end{theorem}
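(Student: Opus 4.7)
The plan is to deduce the weak convergence of the shadow sequence by applying the balanced cocoercive demiclosedness principle \Cref{th:Demiclosed_coco} to the pair $(F_1,F_2):=(J_1,J_2)$ with input sequences $x_{1,k}:=x_k$ and $x_{2,k}:=R_1(x_k)$. Under the standing assumptions $\alpha+\beta\geq 0$ together with \eqref{eq:assumtions_mono}, I will first check that $1+\gamma\alpha>0$ and $1+\delta\beta>0$, so that \Cref{fact:resolvent_mono}\Cref{fact:resolvent_mono_I} supplies the cocoercivity constants $\tau_1:=1+\gamma\alpha$ and $\tau_2:=1+\delta\beta$ for $J_1$ and $J_2$, respectively.

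From \Cref{fact:aDR_mono} I will record $x_k\wto x^\star\in\Fix\Tdr$, $x_k-x_{k+1}\to 0$, and $J_1(x^\star)\in\zer(A+B)$. Rewriting the strong-convergence relation via the identity $\Id-\Tdr=\kappa\mu(J_1-J_2R_1)$ from \Cref{f:aDR_fix}\Cref{f:aDR_fix_i} yields
\[ J_1(x_k)-J_2R_1(x_k)\to 0, \]
which already secures condition \eqref{eq:dc4_coco}. Since $J_1$ is Lipschitz (being $\tau_1$-cocoercive with $\tau_1>0$) and $(x_k)$ is bounded, the shadow sequence $(J_1(x_k))$ is bounded, so I may fix an arbitrary weakly convergent subsequence $J_1(x_{k_n})\wto y$. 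Then $R_1(x_{k_n})=(1-\lambda)x_{k_n}+\lambda J_1(x_{k_n})\wto r^\star:=(1-\lambda)x^\star+\lambda y$ and, since $J_2R_1(x_{k_n})-J_1(x_{k_n})\to 0$, also $J_2R_1(x_{k_n})\wto y$, so conditions \eqref{eq:dc1_coco} and \eqref{eq:dc2_coco} hold for the pair $(x_{k_n}, R_1(x_{k_n}))$ with weak limits $(x^\star, r^\star)$ and common target $y$.

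The decisive step is choosing weights $\rho_1,\rho_2>0$ so that both the balance \eqref{eq:taus_relation} and the strong-limit identity \eqref{eq:dc3_coco} hold. My plan is to take $\rho_1:=\delta$ and $\rho_2:=\gamma$, a choice dictated by the compatibility \eqref{e:lm_mu}. The balance \eqref{eq:taus_relation} reduces to $\gamma\delta(\alpha+\beta)\geq 0$, which holds by hypothesis. Using $R_1=(1-\lambda)\Id+\lambda J_1$ and $\gamma\lambda=\gamma+\delta$, a direct rearrangement shows that
\[ \rho_1(x_{k_n}-J_1(x_{k_n}))+\rho_2(R_1(x_{k_n})-J_2R_1(x_{k_n}))=\gamma\bigl(J_1(x_{k_n})-J_2R_1(x_{k_n})\bigr)\to 0, \]
since the coefficient $\delta+\gamma(1-\lambda)$ of $x_{k_n}-J_1(x_{k_n})$ vanishes. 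A parallel computation using $r^\star=(1-\lambda)x^\star+\lambda y$ and $\gamma\lambda=\gamma+\delta$ shows that the right-hand side $-(\rho_1+\rho_2)y+\rho_1 x^\star+\rho_2 r^\star$ of \eqref{eq:dc3_coco} is also $0$. Invoking \Cref{th:Demiclosed_coco} then gives $J_1(x^\star)=J_2(r^\star)=y$, so every weak cluster point of the bounded sequence $(J_1(x_k))$ equals $J_1(x^\star)$, whence $J_1(x_k)\wto J_1(x^\star)\in\zer(A+B)$.

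I expect the main difficulty to be precisely this weight-selection step: both the balance constraint \eqref{eq:taus_relation} and the strong-convergence identity \eqref{eq:dc3_coco} must be arranged simultaneously, with sequences $x_k$ and $R_1(x_k)$ whose differences only converge weakly. It is the compatibility \eqref{e:lm_mu} between $\gamma$, $\delta$ and $\lambda$ that allows the weights $(\rho_1,\rho_2)=(\delta,\gamma)$ to annihilate the purely weakly convergent term $x_{k_n}-J_1(x_{k_n})$, leaving only the strongly vanishing quantity $J_1(x_{k_n})-J_2R_1(x_{k_n})$.
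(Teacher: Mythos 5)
Your proposal is correct and follows essentially the same route as the paper's proof: both apply \Cref{th:Demiclosed_coco} to $(J_1,J_2)$ along the sequences $(x_k)$ and $(R_1(x_k))$, extract the strong convergence $J_1(x_k)-J_2R_1(x_k)\to 0$ from \Cref{fact:aDR_mono}\Cref{fact:aDR_monoI} and \Cref{f:aDR_fix}\Cref{f:aDR_fix_i}, and verify the balance condition via $\delta=(\lambda-1)\gamma$. Your weights $(\rho_1,\rho_2)=(\delta,\gamma)$ are just a positive rescaling of the paper's $(\lambda-1,1)$, and since \eqref{eq:taus_relation} and \eqref{eq:dc3_coco} are homogeneous in the $\rho_i$'s this makes no difference.
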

\begin{proof}
\Cref{fact:aDR_mono}\Cref{fact:aDR_monoII} asserts that
\begin{equation}\label{eq:aDR_coco_cond1a}
x_k\wto x^\star \in \Fix \Tdr \text{ with } J_1(x^\star)\in \zer(A+B).
\end{equation}
Consequently, $(x_k)_{k=0}^{\infty}$ is bounded. By combining \eqref{e:lm_mu} and \eqref{eq:assumtions_mono} it follows that $1+\gamma\alpha>0$ and $1+\delta\beta>0$ (see \cite[Theorem~5.7]{BDP19}). Thus, we employ \Cref{fact:resolvent_mono} which asserts that $J_1$ and $J_2$ are $\tau_1$-cocoercive and $\tau_2$-cocoercive, respectively, with full domain, where
$$\tau_1:=1+\gamma\alpha \quad\text{and}\quad \tau_2:=1+\delta\beta.$$
Due to the cocoerciveness of $J_1$, the shadow sequence $\left(J_1(x_k)\right)_{k=0}^{\infty}$ is bounded and has a weak converging subsequence, say,
\begin{equation}\label{eq:aDR_coco_cond2a}
J_1\left(x_{k_j}\right)\wto y^\star.
\end{equation}
Set $z_k:= R_1(x_k)$, for each $k=0,1,2,\ldots$. Then
\begin{equation}\label{eq:aDR_coco_cond1b}
z_{k_j} \wto (1-\lambda)x^\star+ \lambda y^\star=:z^\star.
\end{equation}
Moreover, \Cref{fact:aDR_mono}\Cref{fact:aDR_monoI} and \Cref{f:aDR_fix}\Cref{f:aDR_fix_i} imply that
\begin{equation}\label{eq:aDR_coco_cond3}
J_1(x_k) - J_2(z_k)\to 0.
\end{equation}
which, when combined with \eqref{eq:aDR_coco_cond2a}, implies that
\begin{equation}\label{eq:aDR_coco_cond2b}
J_2(z_{k_j})\wto y^\star.
\end{equation}
Thus, on the one hand $J_1(x_{k_j}) - J_2(z_{k_j})\to 0$ while, on the other hand,
\begin{equation}\label{eq:aDR_coco_cond4}
\begin{aligned}
J_1(x_{k_j}) - J_2(z_{k_j})
& =  J_1(x_{k_j}) - R_1(x_{k_j})+ R_1(x_{k_j}) - J_2(z_{k_j})\\
& = (\lambda-1)\left( x_{k_j}- J_{1}(x_{k_j}) \right) + \left(z_{k_j}- J_{2}(z_{k_j})\right)\\
& \wto (\lambda-1)(x^\star-y^\star)+(z^\star - y^\star)= -\lambda y^\star +(\lambda-1)x^\star +z^\star.
\end{aligned}
\end{equation}
By combining \eqref{eq:aDR_coco_cond1a}--\eqref{eq:aDR_coco_cond4}, we see that the sequences $(x_{k_j})_{j=0}^{\infty}$ and $(z_{k_j})_{j=0}^{\infty}$ satisfy the conditions in \eqref{eq:dc_conditions_coco} by setting
\begin{equation}\label{eq:choice_rhos}
\rho_1:=\lambda-1>0 \quad\text{and}\quad  \rho_2:=1>0.
\end{equation}
With this choice of the parameters $\rho_i$'s,
we observe that the balance condition \eqref{eq:taus_relation} is satisfied as well. Indeed, \eqref{e:lm_mu} implies that
\begin{equation*}\label{eq:rel_taus_aDR}
\rho_1(\tau_1-1)+\rho_2(\tau_2-1)=(\lambda-1)\gamma\alpha+\delta\beta=\delta(\alpha+\beta)\geq 0.
\end{equation*}
Consequently, we apply \Cref{th:Demiclosed_coco} in order to obtain $y^\star=J_1(x^\star).$
\end{proof}

\begin{remark}
We observe that \Cref{th:aDR_shadow_mono} guarantees the weak convergence of the shadow sequence whenever the original sequence converges weakly. In particular, this is guaranteed under the conditions on the parameters in \Cref{fact:aDR_mono}. However, this is a meaningful contribution only in the case where $\alpha+\beta=0$: In the case where $\alpha+\beta>0$, it is known that the shadow sequence converges not only weakly but, in fact, strongly (see~\cite[Theorem~4.5]{DPadapt} and \cite[Remark~5.8]{BDP19}). 
\end{remark}

\subsection{Adaptive DR algorithm for comonotone operators}

We now address the weak convergence of the shadow sequence in the case where the operators are comonotone. To this end, we recall the following result regarding the convergence of the aDR algorithm.

\begin{fact}[aDR for comonotone operators]\label{fact:aDR_comono}
Let $\alpha,\beta\in\RR$ such that $\alpha+\beta\geq 0$ and let $A,B:\Hi\tto\Hi$ be maximally $\alpha$-comonotone and maximally $\beta$-comonotone, respectively, such that $\zer(A+B)\neq \varnothing$. Let $(\gamma,\delta,\lambda,\mu)\in\RR^4_{++}$ satisfy \eqref{e:lm_mu} and either
\begingroup
\allowdisplaybreaks
\begin{subequations}\label{eq:assumtions_comono}
\begin{align}
\delta=\gamma+2\alpha, &\quad \text{ if } \alpha+\beta=0,\label{eq:assumtions_comonoI}\\
\text{or } \quad (\gamma+\delta)^2<4(\gamma+\alpha)(\delta+\beta), &\quad \text{ if } \alpha+\beta>0.\label{eq:assumtions_comonoII}
\end{align}
\end{subequations}
\endgroup
Set $\kappa\in{]0,\overline{\kappa}[}$ where
\begin{equation}\label{eq:kappa_comono}
\overline{\kappa}:=\left\{\begin{array}{ll}
1, & \text{if } \alpha+\beta=0;\\
\displaystyle\frac{4(\gamma+\alpha)(\delta+\beta)-(\gamma+\delta)^2}{2(\gamma+\delta)(\alpha+\beta)}, & \text{if } \alpha+\beta>0.
\end{array}\right.
\end{equation}
Set a starting point $x_0\in\Hi$ and $(x_k)_{k=0}^{\infty}$ by $x_{k+1}=\Tdr(x_k),\ k=0,1,2,\ldots.$ Then
\begin{enumerate}[label={\rm(\roman*)}]
\item $x_k-x_{k+1}\to 0;$ \label{fact:aDR_comonoI}
\item $x_k\wto x^\star\in \Fix \Tdr \text{ with } J_1(x^\star)\in \zer(A+B).$  \label{fact:aDR_comonoII}
\end{enumerate}
\end{fact}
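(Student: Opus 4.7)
Since this is stated as a \emph{fact} assembled from the existing literature (just like \Cref{fact:aDR_mono} for the monotone case), my plan is to assemble the conclusion from three ingredients: a structural result on the operator $\Tdr$ under comonotonicity, the standard weak convergence theory for averaged iterations, and the already-established fixed point characterization in \Cref{f:aDR_fix}\Cref{f:aDR_fix_i}--\Cref{f:aDR_fix_ii}.

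First I would verify that under \eqref{e:lm_mu} together with \eqref{eq:assumtions_comono}, the inequality $\gamma+\alpha>0$ and $\delta+\beta>0$ holds. With that in hand, \Cref{fact:resolvent_comono} guarantees that $J_1=J_{\gamma A}$ and $J_2=J_{\delta B}$ are single-valued with full domain and are conically $\frac{\gamma}{2(\gamma+\alpha)}$-averaged and conically $\frac{\delta}{2(\delta+\beta)}$-averaged, respectively. Using \Cref{fact:conical_equiv}\Cref{fact:conical_equivII}, this transfers to conical averagedness constants for the relaxed resolvents $R_1,R_2$ in terms of $\lambda,\mu$, and hence $\Tdr=(1-\kappa)\Id+\kappa R_2R_1$ inherits a conical averagedness constant that depends on $\gamma,\delta,\lambda,\mu,\kappa,\alpha,\beta$.

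Next, the key structural step is to invoke \cite[Theorem~5.4]{BDP19} (explicitly flagged by the authors in the paragraph preceding \Cref{fact:aDR_mono}), which carries out precisely this calculation and shows that the bounds in \eqref{eq:assumtions_comono} together with the threshold $\overline{\kappa}$ in \eqref{eq:kappa_comono} are designed exactly so that $\Tdr$ is (conically) $\theta$-averaged with $\theta\in{]0,1[}$, i.e., genuinely averaged. Once this averagedness is in place, \Cref{f:aDR_fix}\Cref{f:aDR_fix_ii} combined with the assumption $\zer(A+B)\neq\varnothing$ ensures $\Fix \Tdr\neq\varnothing$, and then the standard Krasnosel'ski\u{\i}--Mann convergence theorem (e.g., \cite[Proposition~5.16]{BC17} or the corresponding statement in \cite[Corollary~2.10]{BDP19} used in the proof of \Cref{fact:aDR_mono}) yields both asymptotic regularity $x_k-x_{k+1}\to 0$ and weak convergence $x_k\wto x^\star\in\Fix \Tdr$. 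Finally, \Cref{f:aDR_fix}\Cref{f:aDR_fix_ii} delivers $J_1(x^\star)\in\zer(A+B)$.

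The main obstacle is the structural step: verifying that the algebraic conditions \eqref{eq:assumtions_comono} and the explicit $\overline{\kappa}$ in \eqref{eq:kappa_comono} really produce a conical averagedness constant $\leq 1$ for the composition--relaxation $(1-\kappa)\Id+\kappa R_2R_1$. This is a nontrivial parameter-matching argument (tightly tied to the balance $(\lambda-1)(\mu-1)=1$ in \eqref{e:lm_mu}), but it has been carried out in \cite{BDP19}, so here the proof reduces to citing \cite[Theorem~5.4]{BDP19} and combining it with \cite[Corollary~2.10]{BDP19} and \Cref{f:aDR_fix}\Cref{f:aDR_fix_ii}, in direct parallel to the proof of \Cref{fact:aDR_mono}.
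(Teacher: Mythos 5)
Your proposal matches the paper's proof: the authors likewise dispatch this fact by combining \cite[Theorem~5.4]{BDP19} (averagedness of $\Tdr$ under \eqref{eq:assumtions_comono} and \eqref{eq:kappa_comono}) with \cite[Corollary~2.10]{BDP19} for asymptotic regularity and weak convergence, and then \Cref{f:aDR_fix}\Cref{f:aDR_fix_ii} to place $J_1(x^\star)$ in $\zer(A+B)$, exactly in parallel with \Cref{fact:aDR_mono}. Your additional commentary on why the parameter conditions yield averagedness is accurate but, as you note, already carried out in the cited reference.
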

\begin{proof}
Combine~\cite[Theorem 5.4]{BDP19} with \cite[Corollary~2.10]{BDP19} and \Cref{f:aDR_fix}\Cref{f:aDR_fix_ii}.
\end{proof}

\begin{theorem}[weak convergence of the shadow sequence under comonotonicity]\label{th:aDR_shadow_comono}
Suppose that $A$ and $B$ are maximally $\alpha$-comonotone and maximally $\beta$-comonotone such that $\alpha+\beta\geq 0$ and $\zer(A+B)\neq \varnothing$. Let $(\gamma,\delta,\lambda,\mu)\in\RR_{++}^4$ satisfy \eqref{e:lm_mu} and suppose that
\begin{equation}\label{eq:assumtions_comono_restricted}
(\gamma+\delta) \leq \min\{2(\gamma+\alpha),2(\delta+\beta)\}.
\end{equation}
Let $\kappa\in {]0,\overline{\kappa}[}$ where $\overline{\kappa}$ is defined by \eqref{eq:kappa_comono}. Set a starting point $x_0\in\Hi$  and $(x_k)_{k=0}^{\infty}$ by
%\begin{equation*}
$x_{k+1}=\Tdr(x_k),\ k=0,1,2,\ldots.$
%\end{equation*}
Then the shadow sequence $\seq{J_1(x_k)}{k}$ converges weakly 
$$J_1(x_k)\wto J_1(x^\star)\in \zer(A+B).$$
\end{theorem}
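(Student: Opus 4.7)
The plan is to mirror the strategy used in the proof of \Cref{th:aDR_shadow_mono}, replacing the cocoercive demiclosedness principle \Cref{th:Demiclosed_coco} with the one for conically averaged operators, \Cref{th:Demiclosed_avg}. First I would invoke \Cref{fact:aDR_comono} to obtain $x_k \wto x^\star \in \Fix \Tdr$ with $J_1(x^\star) \in \zer(A+B)$, together with $x_k - x_{k+1} \to 0$. The assumption \eqref{eq:assumtions_comono_restricted} immediately gives $\gamma+\alpha > 0$ and $\delta+\beta > 0$, so \Cref{fact:resolvent_comono} applies and $J_1$, $J_2$ are everywhere defined and conically $\theta_1$- and $\theta_2$-averaged, respectively, with $\theta_1 := \frac{\gamma}{2(\gamma+\alpha)}$ and $\theta_2 := \frac{\delta}{2(\delta+\beta)}$. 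Rewriting \eqref{eq:assumtions_comono_restricted} as $\delta \leq \gamma + 2\alpha$ and $\gamma \leq \delta + 2\beta$ also gives $\theta_1,\theta_2 \in {]0,1[}$.

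Since $J_1$ is Lipschitz and $(x_k)$ is bounded, the shadow sequence $(J_1(x_k))$ is bounded, so it admits a weakly convergent subsequence $J_1(x_{k_j}) \wto y^\star$. Setting $z_k := R_1(x_k)$, I would then have $z_{k_j} \wto z^\star := (1-\lambda)x^\star + \lambda y^\star$, and combining $x_k-x_{k+1}\to 0$ with \Cref{f:aDR_fix}\Cref{f:aDR_fix_i} would yield the strong convergence $J_1(x_k) - J_2(z_k) \to 0$, hence also $J_2(z_{k_j}) \wto y^\star$.

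The key step is to apply \Cref{th:Demiclosed_avg} with $T_1 := J_1$, $T_2 := J_2$, and the weights $\rho_1 := \delta$, $\rho_2 := \gamma$. With this choice, the balance condition \eqref{eq:thetas_relation} reduces precisely to $\gamma+\delta \leq 2(\gamma+\alpha)$ and $\gamma+\delta \leq 2(\delta+\beta)$, which is exactly \eqref{eq:assumtions_comono_restricted}. Conditions \eqref{eq:dc1_conical}, \eqref{eq:dc2_conical}, and \eqref{eq:dc4_conical} are already in hand; for \eqref{eq:dc3_conical} I would use the identity $z_k - J_1(x_k) = -(\lambda-1)(x_k - J_1(x_k))$ together with $\lambda-1 = \delta/\gamma$ from \eqref{e:lm_mu} to observe that the $x_k - J_1(x_k)$ term in $\delta(x_k - J_1(x_k)) + \gamma(z_k - J_2(z_k))$ cancels, leaving $\gamma(J_1(x_k) - J_2(z_k)) \to 0$. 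The theorem then yields $J_1(x^\star) = J_2(z^\star) = y^\star$, and since every weak cluster point of the bounded sequence $(J_1(x_k))$ equals $J_1(x^\star)$, the full shadow sequence converges weakly to $J_1(x^\star) \in \zer(A+B)$.

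The main subtlety is the simultaneous matching of the averagedness balance condition \eqref{eq:thetas_relation} with \eqref{eq:assumtions_comono_restricted} and the exact cancellation needed for \eqref{eq:dc3_conical}: up to scaling, $\rho_1 = \delta$, $\rho_2 = \gamma$ is the unique choice that makes both work, and it relies crucially on the parameter relation \eqref{e:lm_mu}. This mirrors the choice $\rho_1 = \lambda - 1$, $\rho_2 = 1$ in the proof of \Cref{th:aDR_shadow_mono}, which represents the same ratio after rescaling by $\gamma$.
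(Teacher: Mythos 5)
Your proposal follows the paper's own proof almost verbatim: invoke \cref{fact:aDR_comono}, use \cref{fact:resolvent_comono} to get the conical averagedness constants $\theta_1=\frac{\gamma}{2(\gamma+\alpha)}$, $\theta_2=\frac{\delta}{2(\delta+\beta)}$, extract a weakly convergent subsequence of the shadow sequence, and feed $J_1$, $J_2$ into \cref{th:Demiclosed_avg}. Your weights $(\rho_1,\rho_2)=(\delta,\gamma)$ are just $\gamma$ times the paper's $(\lambda-1,1)$, and since both \eqref{eq:thetas_relation} and \eqref{eq:dc3_conical} are homogeneous in $(\rho_1,\rho_2)$ this changes nothing; your verification of the balance condition and the cancellation giving \eqref{eq:dc3_conical} are both correct.

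The one step you skip is the justification that \cref{fact:aDR_comono} is applicable at all: its hypothesis is \eqref{eq:assumtions_comono}, not \eqref{eq:assumtions_comono_restricted}, and the implication is not purely formal. When $\alpha+\beta=0$ the two non-strict inequalities in \eqref{eq:assumtions_comono_restricted} must be combined to yield the \emph{equality} $\delta=\gamma+2\alpha$ of \eqref{eq:assumtions_comonoI}; when $\alpha+\beta>0$ you must upgrade $(\gamma+\delta)^2\leq 4(\gamma+\alpha)(\delta+\beta)$ to the \emph{strict} inequality \eqref{eq:assumtions_comonoII}, which requires observing that equality throughout would force $(\gamma+\delta)=2(\gamma+\alpha)=2(\delta+\beta)$ and hence $\alpha+\beta=0$, a contradiction. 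This strictness also ensures $\overline{\kappa}>0$ in \eqref{eq:kappa_comono}, so the interval ${]0,\overline{\kappa}[}$ is nonempty. The paper devotes the opening paragraph of its proof to exactly this reduction; adding it makes your argument complete. Your extra remark that $\theta_1,\theta_2\in{]0,1[}$ is a nice touch (it is needed to apply \cref{th:Demiclosed_avg} as stated, and the paper leaves it implicit in the balance condition).
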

\begin{proof}
We claim that \eqref{eq:assumtions_comono_restricted} implies \eqref{eq:assumtions_comono}. Indeed, if $\alpha+\beta=0$, we obtain
\begin{align*}
(\gamma+\delta) \leq \min\{2(\gamma+\alpha),2(\delta-\alpha)\} & 
\ \iff\ (\gamma+\delta)\leq 2(\gamma+\alpha) \quad\text{and}\quad (\gamma+\delta)\leq 2(\delta-\alpha)\\
& \ \iff\ \delta = \gamma+2\alpha,
\end{align*}
which is \eqref{eq:assumtions_comonoI}. On the other hand, observe that \eqref{eq:assumtions_comono_restricted} trivially implies that
\begin{equation*}%\label{eq:aDR_como_V1}
(\gamma+\delta)^2\leq (\min\{2(\gamma+\alpha),2(\delta+\beta)\})^2 \leq 4(\gamma+\alpha)(\delta+\beta).
\end{equation*}
Suppose that $(\gamma+\delta)^2=(\min\{2(\gamma+\alpha),2(\delta+\beta)\})^2 = 4(\gamma+\alpha)(\delta+\beta)$. Then
\begin{equation*}
(\gamma+\delta)=2(\gamma+\alpha)=2(\beta+\delta),
\end{equation*}
which implies that $\alpha+\beta=0$. Thus, \eqref{eq:assumtions_comonoII} holds whenever $\alpha+\beta>0$ which concludes the proof of our claim. Consequently, we employ \Cref{fact:aDR_comono} in order to obtain 
\begin{equation*}
x_k\wto x^\star \in \Fix \Tdr \text{ with } J_1(x^\star)\in \zer(A+B).
\end{equation*}
We conclude that $(x_k)_{k=0}^{\infty}$ is bounded. Now, \eqref{e:lm_mu} and \eqref{eq:assumtions_comono} imply that $\gamma+\alpha>0$ and $\delta+\beta>0$ (see \cite[Theorem~5.4]{BDP19}). Thus, by \Cref{fact:resolvent_comono}, $J_1$ and $J_2$ are conically $\theta_1$-averaged and $\theta_2$-averaged, respectively, with full domain, where
\begin{equation}\label{eq:conical_resolvents}
\theta_1:= \frac{\gamma}{2(\gamma+\alpha)} \quad\text{and}\quad \theta_2:= \frac{\delta}{2(\delta+\beta)}.
\end{equation}
Since $J_1$ is conically averaged, the shadow sequence $\left(J_1(x_k)\right)_{k=0}^{\infty}$ is bounded and has a weakly convergent subsequence, say, $J_1(x_{k_j})\rightharpoonup y^\star$.  By the same arguments as in the proof of \Cref{th:aDR_shadow_mono}, while employing \Cref{th:Demiclosed_avg} instead of \Cref{th:Demiclosed_coco}, we arrive at
$$y^\star=J_1(x^\star),$$
which concludes the proof. To this end, it remains to verify that \eqref{eq:thetas_relation} holds and, then, \Cref{th:Demiclosed_avg} is applicable. Indeed, by~\eqref{eq:conical_resolvents}, \eqref{eq:choice_rhos} and \eqref{e:lm_mu}, 
\begin{align*}
\theta_1\leq\frac{\rho_2}{\rho_1+\rho_2} \quad\text{and}\quad \theta_2\leq\frac{\rho_1}{\rho_1+\rho_2} & 
\quad\iff\quad  \frac{\gamma}{2(\gamma+\alpha)}\leq\frac{1}{\lambda} \quad\text{and}\quad \frac{\delta}{2(\delta+\beta)}\leq\frac{\lambda-1}{\lambda}\\
& \quad\iff\quad  \frac{\gamma}{2(\gamma+\alpha)}\leq\frac{1}{\lambda} \quad\text{and}\quad \frac{\gamma}{2(\delta+\beta)}\leq\frac{1}{\lambda} \\[1.5ex]
& \quad\iff\quad \gamma\lambda \leq \min\{2(\gamma+\alpha),2(\delta+\beta)\}\\[1.5ex]
& \quad\iff\quad (\gamma+\delta) \leq \min\{2(\gamma+\alpha),2(\delta+\beta)\},
\end{align*}
which is \eqref{eq:assumtions_comono_restricted}.
\end{proof}

\begin{remark}
We note that, in contrast to \Cref{th:aDR_shadow_mono}, \Cref{th:aDR_shadow_comono} guarantees the weak convergence of the shadow sequence under stronger conditions on the parameters than the conditions in \Cref{fact:aDR_comono}, namely, in the case where $\alpha+\beta>0$. Nevertheless, \Cref{th:aDR_shadow_comono} covers new ground since the convergence of the shadow sequence in the aDR for comonotone operators has not been previously addressed. Although outside the scope of this work, we believe that the convergence of the shadow sequence may be strong whenever $\alpha+\beta>0$, as in the case of monotone operators.
\end{remark}

\section{Conclusions}\label{sec:conclu}

In this paper, we extend the multi-operator demiclosedness principle \cite{Demiclosed} to more general classes of operators such as cocoercive and conically averaged operators. The new findings are natural and consistent with existing theory, and are later justified by applications in which we show the weak convergence of the shadow sequence of the adaptive Douglas--Rachford algorithm. It remains of interest to find new connections between the demiclosedness principle and other classes of algorithms and optimization problems.

\renewcommand{\baselinestretch}{1}

{\small
\paragraph{\small Acknowledgements}
Sedi Bartz was partially supported by a UMass Lowell faculty startup grant. Rub\'en Campoy was partially supported by a postdoctoral fellowship of UMass Lowell. Hung M. Phan was partially supported by Autodesk, Inc. via a gift made to the Department of Mathematical Sciences,
UMass Lowell.}


\begin{thebibliography}{}

\bibitem{BDP19}
Bartz, S., Dao, M.N., Phan, H.M.:
\newblock Conical averagedness and convergence analysis of fixed point algorithms.
\newblock ArXiv preprint: \href{https://arxiv.org/abs/1910.14185}{1910.14185} (2019)


\bibitem{Demiclosed}
Bauschke, H.H.:
\newblock New demiclosedness principles for (firmly) nonexpansive operators.
\newblock In: Bailey D. et al. (eds) {\em Computational and Analytical Mathematics}. Springer Proceedings in Mathematics \& Statistics, vol 50. Springer, New York, NY (2013)

\bibitem{BC17}
Bauschke, H.H., Combettes, P.L.:
\newblock {\em Convex analysis and monotone operator theory in Hilbert spaces}, 2nd edition,
\newblock Springer, Berlin (2017).

\bibitem{BMW19}
Bauschke, H. H., Moursi, W. M., Wang, X.:
\newblock Generalized monotone operators and their averaged resolvents.
\newblock {\em Math. Program.} (2020). DOI: \href{https://doi.org/10.1007/s10107-020-01500-6}
{10.1007/s10107-020-01500-6}

\bibitem{BOW18}
Bauschke, H.H., Ouyang, H., Wang, X.,
\newblock On circumcenter mappings induced by nonexpansive operators,
\newblock ArXiv preprint: 
\href{https://arxiv.org/abs/1811.11420}{1811.11420} (2018)

\bibitem{Browder}
Browder, F.E.:
\newblock Semicontractive and semiaccretive nonlinear mappings in Banach spaces.
\newblock {\em Bull. Amer. Math. Soc.} 74, 660--665 (1968)


\bibitem{weakDR}
B\`ui, M. N., Combettes, P. L.:
\newblock The Douglas--Rachford Algorithm converges only weakly.
\newblock {\em SIAM J. Control Optim.} 58(2), 1118--1120 (2020).

%\bibitem{weakDR}
%Bùi, M. N., Combettes, P. L.:
%\newblock The Douglas--Rachford Algorithm converges only weakly.
%\newblock ArXiv preprint: \href{https://arxiv.org/abs/1912.09564}{1912.09564} (2019)

\bibitem{CP04}
Combettes, P. L., Pennanen, T.:
\newblock Proximal methods for cohypomonotone operators.
\newblock {\em SIAM J. Control Optim.}, 43(2), 731--742 (2004)

\bibitem{DPadapt}
Dao, M.N., Phan, H.M.:
\newblock Adaptive Douglas-Rachford splitting algorithm for the sum of two operators.
\newblock {\em SIAM J. Optim.} 29(4), 2697--2724 (2019)

\bibitem{DR56}
Douglas, J., Rachford, H. H.:
\newblock On the numerical solution of heat conduction problems in two and three space variables.
\newblock {\em Trans. Amer. Math. Soc.} 82, 421--439 (1956)

\bibitem{DC1}
Garcia--Falset, J., Sims, B., Smyth, M. A.:
\newblock The demiclosedness principle for mappings of asymptotically nonexpansive type.
\newblock {\em Houston J. Math.} 22, 101--108 (1996)

\bibitem{GW19}
Giselsson, P., Moursi, W. M.:
\newblock On compositions of special cases of Lipschitz continuous operators.
\newblock ArXiv preprint: \href{https://arxiv.org/abs/1912.13165}{1912.13165} (2019)

\bibitem{DC2}
Li, G., Kim, J. K.:
\newblock Demiclosedness principle and asymptotic behavior for nonexpansive mappings in metric spaces.
\newblock {\em Appl. Math. Lett.} 14(5), 645--649 (2001)


\bibitem{LM79}
Lions, P.L., Mercier, B.:
\newblock Splitting algorithms for the sum of two nonlinear operators.
\newblock {\em SIAM J. Numer. Anal.} 16(6), 964--979 (1979)

\bibitem{Minty}
Minty, G.A.:
\newblock A theorem on monotone sets in Hilbert spaces.
\newblock {\em J. Math. Anal. Appl.} 14, 434--439 (1967)

\bibitem{DC3}
Osilike, M. O., Udomene, A.:
\newblock Demiclosedness principle and convergence theorems for strictly pseudocontractive mappings of Browder--Petryshyn type.
\newblock {\em J. Math. Anal. Appl.} 256(2), 431--445 (2001)

\bibitem{Svaiter}
Svaiter, B. F.:
\newblock On weak convergence of the {D}ouglas--{R}achford method.
\newblock {\em SIAM J. Control Optim.} 49(1), 280--287 (2011)

\bibitem{DC4}
Zhou, H.:
\newblock Demiclosedness principle with applications for asymptotically pseudo-contractions in Hilbert spaces.
\newblock {\em Nonlinear Anal. Theory Methods Appl.} 70(9), 3140--3145 (2009)
\end{thebibliography}
\end{document}